\begin{document}

\numberwithin{equation}{section} \numberwithin{lemma}{section}


\begin{frontmatter}
\title{Modeling tagged pedestrian motion: a 
mean-field type game approach\thanks{Financial support from the 
Swedish 
Research 
Council (2016-04086) is 
gratefully acknowledged. \\
We thank the anonymous reviewers for comments and suggestions that greatly 
helped to improve the presentation of the results, and we thank E. Cristiani 
for directing us to \cite{cristiani2015modeling}.
} }
\author{Alexander Aurell \thanks{Department of Mathematics, KTH Royal Institute 
of Technology, 100 44 
Stockholm, Sweden. \newline E-mail address: aaurell@kth.se}
\qquad 
Boualem Djehiche \thanks{Department of Mathematics, KTH Royal Institute 
of Technology, 100 44 
Stockholm, Sweden. \newline E-mail address: boualem@kth.se}
}

\runtitle{Tagged pedestrian model}
\runauthor{Aurell, Djehiche}

\today

\skp

\begin{abstract}
This paper suggests a model for the motion of \textit{tagged pedestrians}: 
pedestrians moving towards a specified targeted destination, which they are 
forced to reach. It aims to be a decision-making tool for the 
positioning of fire fighters, security personnel and other services in a 
pedestrian environment. Taking interaction with the surrounding crowd into 
account leads to a differential nonzero-sum game model where the tagged 
pedestrians compete with the surrounding crowd of ordinary pedestrians. 
When deciding how to act, pedestrians consider crowd distribution-dependent 
effects, like congestion and crowd aversion. Including such effects in the 
parameters of the game, makes it a \textit{mean-field type game}. The 
equilibrium control is characterized, and special cases are discussed. Behavior 
in the model is studied by numerical simulations.

\noi {\bf MSC 2010: 49N90, 49J55, 60H10, 60K30, 91A80, 93E20}

\noi {\bf Keywords: pedestrian dynamics; backward-forward stochastic 
differential equations; mean-field type games; congestion; crowd 
aversion; evacuation planning}
\end{abstract}

\end{frontmatter}


\section{Introduction}
\label{sec:introduction}
Tagged pedestrians are individuals that plan their motion 
from an \textit{unspecified initial position} in order to reach a 
\textit{specified target location} in a certain time. The model for tagged 
pedestrian motion proposed in this paper is based on mean-field type game 
theory, and is a decision making tool for the positioning of fire fighters, 
medical personnel, etc, during mass gatherings.
The tagged's prime objective is the pre-set final destination 
which is considered essential to reach; ending up in proximity of the final 
destination is not acceptable, it has to be reached. This is in sharp 
contrast to the standard finite-horizon 
models cited below, where pedestrians are penalized if their final position 
deviates from a target position, such penalization is a 'soft' constraint and 
can be broken at a cost.
The tagged's initial position is chosen rationally. Therefore, we are 
inclined to think of the tagged as external entities to be deployed in the 
crowd. Where they (rationally) ought to be deployed is subject to an offline 
calculation made by a coordinator: a \textit{central planner}. Besides tagged 
pedestrian motion, possible applications of the model include cancer cell 
dynamics and smart medicine in the human body, and malware propagation in a 
network, among other.
\vspace{0.2cm}\\
The central planner's decision making is based on knowledge of 
the pedestrian distribution. As noted in \cite{naldi2010mathematical}, the 
pedestrian behavior in dense crowds is empirically random to some extent, 
likely due to the large number of external inputs. In a noisy environment, the 
central planner anticipates the behavior of the crowd and predicts the tagged's 
path to the target. As is standard in the mean-field approach, interaction with 
tagged and ordinary pedestrians is modeled as reactions to the 
state distribution of a representative tagged and ordinary pedestrian, 
respectively. This leads us to formulate a mean-field type game based
model, which in certain scenarios reduces to an optimal control based model.

\subsection{Related work}

\subsubsection{Optimal control and games of mean-field type}

Rational pedestrian behavior is, in this paper, characterized by either a game 
equilibrium, or an optimal control. 
The tool used to find the equilibrium/optimal behavior is Pontryagin's 
stochastic maximum 
principle (SMP). For stochastic control problems, SMP yields, when available, 
necessary conditions that must be satisfied by any 
optimal solution. The necessary conditions become sufficient under additional 
convexity conditions. Early results show that an optimal control along 
with the corresponding optimal state trajectory must solve the so-called 
Hamiltonian system, which is a two-point (forward-backward) boundary value 
problem, together with a maximum condition of the so-called Hamiltonian 
function (see~\cite{yong1999stochastic} for a 
detailed account). 
In stochastic differential games, both zero-sum and nonzero-sum, SMP is one of 
the main tools for obtaining conditions for an equilibrium, essentially 
inherited from the theory of stochastic optimal control. For recent examples 
of the use of SMP in stochastic differential game theory, see 
\cite{moon2018risk, aurell2018mean}.
\vspace{0.2cm}\\
In stochastic systems, the backward equation is 
fundamentally different from the forward equation, if the solution is required 
to be adapted. An adapted solution to a backward stochastic differential 
equation (BSDE) is a pair of adapted stochastic processes $(Y_\cdot, Z_\cdot)$, 
where $Z_\cdot$ corrects any ``non-adaptedness'' caused by the terminal 
condition of $Y_\cdot$. As pointed out in \cite{kohlmann2000relationship}, the 
first component~$Y_\cdot$ corresponds to the mean evolution of the dynamics, 
and $Z_\cdot$ to the risk between current time and terminal time. Linear BSDEs 
extend to non-linear BSDEs \cite{pardoux1990adapted} and backward-forward SDEs 
(BFSDE) \cite{antonelli1993backward,hu1995solution}. 
BSDEs with distribution-dependent coefficients, mean-field BSDEs, are by now 
well-understood objects \cite{buckdahn2009mean, buckdahn2009meanb}. 
Mean-field BFSDEs arise naturally in the probabilistic analysis of mean-field 
games (MFG), mean-field type games (MFTG) and optimal control of mean-field 
type equations.
\vspace{0.2cm}\\
The theory of optimal control of mean-field SDEs, initiated in 
\cite{andersson2011maximum}, treats stochastic control problems with 
coefficients dependent on the marginal state-distribution. This theory is by 
now well developed for forward stochastic dynamics, i.e., with initial 
conditions on state \cite{bensoussan2013mean, djehiche2015stochastic, 
buckdahn2016stochastic, carmona2018probabilistic}. With SMPs for optimal 
control problems of mean-field type 
at hand, MFTG theory can inherit these techniques, just like 
stochastic differential game theory does in the mean-field free case. See 
\cite{tembine2017mean} for a review of solution approaches to MFTGs. 
\vspace{0.2cm}\\
Optimal control of mean-field BSDEs has recently gained attention. In 
\cite{li2016linear} the mean-field LQ BSDE control problem with 
deterministic coefficients is studied. Assuming 
the control space is linear, linear perturbation is used to derive a 
stationarity condition which together with a mean-field FBSDE system 
characterizes the optimal control. Other recent work on the control of 
mean-field BSDEs makes use of the SMP approach 
of \cite{yong2010forward} to control of BFSDEs.
\vspace{0.2cm}\\
Optimal control problems of mean-field type can be interpreted as large 
population limits of cooperative games, 
where the players collaborate to optimize a joint objective 
\cite{lacker2017limit}. A close relative to mean-field type control is 
MFG, a class of non-cooperative stochastic differential 
games, initiated by \cite{huang2006large} and \cite{lasry2007mean} 
independently. For both mean-field type control problems and MFG, the 
games approximated are games between a large number of indistinguishable 
(anonymous) players, interacting weakly through a mean-field coupling term.
Weak~player-to-player interaction through the mean-field coupling restricts the
influence one player has on any other player to be inversely proportional to 
the number of players, hence the level of influence of any specific player is 
very small. In contrast to the MFG, players in a MFTG can be influential, and 
distinguishable (non-anonymous). That is, state~dynamics and/or cost need not 
be of the same form over the whole player population, and~a~single player can 
have a major influence on other players' dynamics and/or cost.

\subsubsection{Pedestrian crowd modeling}
There is a variety of mathematical approaches to the 
modeling of pedestrian crowd motion. \emph{Microscopic 
force-based} models \cite{helbing1995social, chraibi2011force}, and in 
particular the social force model, represent pedestrian behavior as a reaction 
to forces and potentials, applied not only by the surrounding environment but 
also by the pedestrian's internal motivation and desire. A \emph{cellular 
automata} approach to microscopic modeling of pedestrian 
crowds can be found in \cite{burstedde2001simulation, 
jin2017simulating, ibrahim2017features}, to mention only few. 
\emph{Macroscopic} models view the crowd as a continuum, described by averaged 
quantities such as density and pressure. The Hughes model 
\cite{hughes2002continuum, huang2009revisiting, twarogowska2014macroscopic} 
couples a conservation law, representing the physics of the crowd, with an 
Eikonal equation modeling a common task of the pedestrians. Its variations are 
manifold. \emph{Kinetic} and other \emph{multi-scale} models 
\cite{bellomo2013microscale, cristiani2011multiscale, bellomo2016human} 
constitute an intermediate step between the micro- and the macro scales.  
\emph{Microscopic game} and \emph{optimal control} models for pedestrian 
crowd dynamics, with their relevant continuum limits in 
the form of \emph{mean-field games} and \emph{mean-field type optimal 
control}, have gained interest in the last decade. In 
\cite{hoogendoorn2004pedestrian}, a simplified MFG was used to model 
rational behavior of pedestrian in a crowd. Following Lasry and Lion's paper on 
MFG, \cite{dogbe2010modeling} applied MFG to pedestrian crowd 
motion and \cite{lachapelle2011mean} used MFG to model local congestion effects, 
i.e. the relationship between energy needed to walk/run and local crowd density.
MFG based models have also been used to simulate evacuation of pedestrians
\cite{burger2013mean, burger2014mean, djehiche2017mean}.
\vspace{0.2cm}\\
The mean-field approach rests on an exchangeability 
assumption; pedestrians are anonymous, they may have different paths but one 
individual cannot be distinguished from another. While interesting when 
modeling circumstances where pedestrians can be considered indistinguishable, 
for instance a train station during rush hour or fast exits of an area in case 
of an alarm, there are situations where an anonymous crowd model is not 
satisfactory. Mean-field type games is a tool to extend the mean-field 
approach to distinguishable sub-crowds and influential individuals, and was 
applied in \cite{aurell2017mean}. Other ways to break the anonymity 
within the mean-field approach are \emph{multi-population MFGs} 
\cite{feleqi2013derivation, cirant2015multi, achdou2017mean} and \emph{major 
agent models} \cite[and references therein]{huang2016backward}.
\vspace{0.2cm}\\
Another important characteristic of standard MFGs and MFTGs is the assumption of
anticipative players. Each player is assumed to predict how 
the whole population will act in the future, and then pick a strategy 
accordingly. In a pedestrian crowd setting, this would correspond to pedestrians 
knowing, most likely by experience, how the surrounding crowd will behave. 
This is a very high 'level of rationality', and certainly not appropriate in 
all scenarios. We refer to \cite{cristiani2015modeling} for a precise 
discussion on 
the level of rationality of pedestrian crowd models, including mean-field games 
and optimal control of mean-field type based models.

\subsection{Paper contribution and outline}

This paper investigates a new modeling approach to the motion of 
pedestrians whose primary objective is to reach a specific target, at a 
specific time. The model can represent a large group that is steered by a
central planner, or a single individual. Even though these pedestrians have 
certain non-standard goals, they are constrained by the same physical 
limitations as ordinary pedestrians, and the central planner takes this into 
consideration. Moreover, the central planner has access to complete information 
of the surrounding environment, and utilizes it in the decision making process.
\vspace{0.2cm}\\
The contribution of this paper is a mean-field type game based model for the 
motion of tagged pedestrians in a surrounding crowd of 'ordinary pedestrians'. 
The players in the game are \textit{crowds}, and act under 
general distribution-dependent dynamics and cost. The tagged pedestrians have a 
'hard' terminal condition, while the ordinary pedestrians have a 'hard' initial 
condition, and this results in a state equation in the form of a mean-field 
BFSDE, representing i) the predicted 
motion of the tagged towards the target, coupled with ii) the evolution of the 
surrounding crowd from a known initial configuration.
\vspace{0.2cm}\\
Rational behavior in the model is characterized with a version of SMP, tailored 
for the mean-field BFSDE system with mean-field type costs. The central section 
of this paper is the solved examples, where we illustrate pedestrian behavior 
in the model. Further directions of research are also outlined.
\vspace{0.2cm}\\
The tagged pedestrian model is presented in Section \ref{sec:BFSDE}, which 
begins in a deterministic setting, to which we gradually add components until 
the full model is reached. The SMP that gives necessary and sufficient 
conditions for a pair of equilibrium controls for the mean-field type game is 
presented in Theorem \ref{thm:game_smp}. Examples of tagged motion are studied 
in Section \ref{sec:distributed}. All technical proofs and background theory 
are moved to the appendix.

\section{The tagged pedestrian model}
\label{sec:BFSDE}
In this section the tagged pedestrian model is introduced. Velocity fields 
are the driving components in the model and include both small-scale pedestrian 
interactions and path planning components. The latter is implemented by 
pedestrians in a rational way: A cost functional summarizing pedestrian 
preferences is minimized. The former 
describes involuntary movement over which the pedestrian has no control. 
\subsection*{List of symbols}
\begin{itemize}
 \item[] $T\in (0.\infty)$ -- the time horizon.
 \item[] $(\Omega, \mathcal{F}, \mathbb{F}, \mathbb{P})$ -- the underlying 
  filtered probability space.
 \item[] $\mathcal{P}(\mathbb{R}^d)$ -- the space of probability measures on 
$(\mathbb{R}^d, \mathcal{B}(\mathbb{R}^d))$.
 \item[] $\mathcal{P}_2(\mathbb{R}^d)$ -- all square-integrable measures in 
$\mathcal{P}(\rd)$.
 \item[] $\law{X} \in \mathcal{P}(\mathbb{R}^k)$ -- the distribution of the 
random variable $X\in\mathbb{R}^k$ under $\mathbb{P}$.
 \item[] $X_\cdot$ -- the stochastic process $\{X_t;\, t\geq 0\}$.
 \item[] $L^2_{\mathcal{F}}(\rd)$ -- the space of $\mathcal{F}$-measurable 
square-integrable $\rd$-valued random variables.
\end{itemize}

\subsection{Pedestrian dynamics}

In a deterministic setting pedestrian state dynamics are 
described by ordinary differential equations (ODE). An ordinary pedestrian is 
initiated at some location $x_0\in\rd$, and moves according to an 
ODE with an initial condition,
\begin{equation}
\label{eq:det_dynamicsx}
\begin{cases}
 \frac{d}{dt}X_t = b^x_t, & t \in [0,T],
 \\
 X_0 = x_0.
\end{cases}
\end{equation}
The pedestrian influences its velocity through a \textit{control} function, 
$u^x_\cdot$. The control is assumed to take values in the set 
$U^x\subset\mathbb{R}^{d_x}$, $d_x\geq 1$. Alongside the control function, 
$b^x_t$ may depend on interaction with other pedestrians, for example through 
collisions. In the literature, the velocity is often split into a 
\textit{behavioral velocity} (the control) and an \textit{interaction velocity},
\begin{equation}
 b^x_t = u^x_t + b^{x,\text{int}}_t.
\end{equation}
So on top of any interaction 
velocity the tagged influences its movement through a control, and this grants 
it some smartness. As was discussed in the introduction, the pedestrian may 
foresee crowd movement and act in advance to avoid congested areas and other 
obstacles. Pedestrian models that considers the behavioral velocity to be an 
internal choice of the pedestrian leaves the framework of classical particle 
models and enters decision-based smart models. A summary of 
the difference between these model classes is found in 
\cite{naldi2010mathematical}.
Alongside ordinary pedestrians, \textit{tagged pedestrians} are deployed. 
They represent a person on a mission, who has to reach a target location $y_T 
\in 
\rd$ at time $t=T$. In the deterministic setting, the tagged moves according to 
an ODE with terminal 
condition,
\begin{equation}
\label{eq:det_dynamics}
 \begin{cases}
  \frac{d}{dt}Y_t = u^y_t + b^{y,\text{int}}_t,& t\in[0,T],\ u^y_t \in 
U^y\subset \mathbb{R}^{d_y},\ d_y\geq 1,
  \\
  Y_T = y_T.
 \end{cases}
\end{equation}
Just like $u_\cdot^x$ influences the terminal position of an ordinary 
pedestrian, so does $u_\cdot^y$ influence the initial position of a tagged 
pedestrian. This should be 
interpreted in the following way: the initial position of the tagged pedestrian 
is not pre-determined, but depends on the pedestrian's choice of behavioral 
velocity. This 
choice is subject to the terminal condition, and at the 
same time it adheres to other preferences. For example, if there is a high 
risk 
of injury at a certain location $y_T$ (doors, stairs, etc.), where is the best 
spot for a medic to be positioned, so that she can reach $y_T$ in time $T$?
Certainly not at $y_T$, since it is a high risk area. The medic's initial 
location is preferably a safe spot, from which it is easy for her to 
access $y_T$, taking surrounding pedestrians and environment into account. This 
is what should be reflected in the choice of $u_\cdot^y$. 
\vspace{0.2cm}\\
Pedestrian motion can be considered deterministic if the crowd is sparse, but 
partially random if the crowd is dense. 
To capture this, \eqref{eq:det_dynamicsx}-\eqref{eq:det_dynamics} is extended 
to its stochastic counterpart. Let
$(\Omega, \F, \p)$ be a complete probability space, endowed with the filtration 
$\f = (\F_t)_{t\geq 0}$, 
satisfying the usual conditions. Let the space carry $B^x_\cdot$ and 
$B^y_\cdot$, independent $w_x$- and $w_y$-dimensional $\f$-Wiener processes, 
and a random variable $y_T\in L^2_{\F_T}(\rd)$ independent of $B_\cdot := 
(B^x_\cdot, B^y_\cdot)^*$. The Brownian motion $B_\cdot$ is split into 
$B^x_\cdot$ and $B^y_\cdot$ to emphasize modeling features: $B^x_\cdot$ is the 
noise that explicitly effects the ordinary pedestrians diffusion, while 
$B^y_\cdot$ may be used to model any noise that in addition to 
$B^x_\cdot$ effects the tagged. All information in the model up to time $t$ is 
contained in $\F_t$, and a 
process that depends only on past and current information is called an 
$\mathcal{F}_t$-\textit{adapted process}. It is natural to require pedestrian 
motion to be adapted, since pedestrians 
react causally to the environment.  In this random environment, we consider a 
control to be feasible if it is open-loop adapted and square integrable, i.e. 
belongs to the sets $\mathcal{U}^x$ and $\mathcal{U}^y$ for ordinary and 
tagged pedestrians respectively,
\begin{equation}
\label{eq:def_feasible}
\begin{aligned}
 \mathcal{U}^x &:= \left\{ u : \Omega \times [0,T] \rightarrow U^x\ \big|\ 
u_\cdot \text{ is }\mathbb{F}\text{-adapted},\ \mathbb{E}\left[\int_0^T |u_s|^2 
ds\right] < \infty\right\},
 \\
 \mathcal{U}^y &:= \left\{ u : \Omega \times [0,T] \rightarrow U^y\ \big|\ 
u_\cdot \text{ is }\mathbb{F}\text{-adapted},\ \mathbb{E}\left[\int_0^T |u_s|^2 
ds\right] < 
\infty\right\}.
 \end{aligned}
\end{equation}
As a demonstration of how randomness effects the tagged model, 
consider the tagged dynamics with Brownian small-scale interactions and a 
random terminal condition $y_T\in L^2_{\mathcal{F}_T}(\rd)$,
\begin{equation}
\begin{cases}
 dY_t = (u^y_t + B^y_t)dt, & \forall\ t\in[0,T],\ u^y_\cdot \in \mathcal{U}^y,
 \\
 Y_T = y_T.
 \end{cases}
\end{equation}
The naive solution $Y_t = y_T 
- \int_t^T (u^y_s + B^y_s) ds$ is not $\F_t$-adapted, it depends on $(B^y_s; 
t\leq s \leq T)$! On the other hand, 
\begin{equation}
\label{eq:conditional_expectation_Y}
Y_t = 
\x{y_T - 
\int_t^T u^y_s + B^y_s ds\ |\ 
\F_t} 
\end{equation}
is $\F_t$-adapted. If $Y_t$ from 
\eqref{eq:conditional_expectation_Y} is square-integrable for all 
$t\in[0,T]$, which it is in the current setup,
the Martingale Representation Theorem grants
existence of a unique square-integrable 
$\mathbb{R}^{d \times (w_x 
+ w_y)}$-valued and $\F_t$-adapted process $Z_\cdot$ such that
\begin{equation}
\begin{cases}
\label{eq:bsde_1st}
 Y_t - \int_0^t (u^y_s + B^y_s)ds = \int_0^t Z_sdB_s,& 
\forall t\in[0,T],
 \\
 Y_T = y_T.
\end{cases}
\end{equation} 
Equation \eqref{eq:bsde_1st} constitutes a 
BSDE. The conditional expectation \eqref{eq:conditional_expectation_Y} can be 
interpreted as the 
$L^2$-projection of the tagged's future path onto currently available 
information. Therefore, a practical interpretation of $Z_\cdot$ is that it is a 
supplementary control used by the tagged, to make its path to $y_T$ the 'best 
prediction' at any instant in time. From a modeling point 
of 
view, the tagged pedestrian thus uses two control processes:
\vspace{-0.2cm}
\begin{itemize}
 \item $u^y_\cdot$ - to heed preferences on initial position, speed, congestion 
and more. It is the tagged's subjective best response to the environment.
  \item $Z_\cdot$ - to predict the best path to $y_T$ given 
$u^y_\cdot$. It is a square-integrable process, implicitly given by the 
Martingale Representation Theorem.
\end{itemize}
\noindent
Interaction between pedestrians at time $t$ is introduced via the mean-field 
of ordinary pedestrians, $\law{X_t} := \p\circ(X_t)^{-1}$, and 
tagged pedestrians, $\law{Y_t}$. They approximate the over-all 
behavior of the crowds in the 
large population limit, under the assumption that within each of the two 
groups individuals are indistinguishable (anonymous). This assumption is in 
place throughout the paper. An example of a mean-field dependent preference 
is the following: to safely accomplish its mission, a security team prefers 
that no individual deviates too far away from the mean position of the team. 
Also, effects like congestion, the extra effort needed when 
moving in a high density area, and aversion, repulsion from other pedestrians, 
can be captured with distribution dependent coefficients. 
\vspace{0.2cm}\\
In a random environment, with mean-field interactions, we formulate the 
dynamics of representative group members (ordinary and tagged) in the model as
\begin{equation}
\label{eq:backward_dynamics}
\left\{
  \begin{aligned}
   dY_t &= b^y(t, \Theta^y_t, Z_t, \Theta^x_t)dt + Z_ 
tdB_t,
 \\
 dX_t &= b^x(t, \Theta^y_t, Z_t, \Theta^x_t)dt + 
\sigma^x(t, \theta^y_t, Z_t, \theta^x_t)dB_t^x,
  \\
  Y_T &= y_T,\qquad X_0 = x_0,
  \end{aligned}
 \right.
\end{equation}
where $\Theta^y_t := (Y_t, \law{Y_t}, u^y_t)$, $\theta^y_t := (Y_t, 
\law{Y_t})$ and
\begin{equation}
\begin{aligned}
 b^x, b^y &: \Omega \times [0,T] \times \rd \times \mathcal{P}(\rd) \times U^y 
\times \mathbb{R}^{d\times(w_x+w_y)} \times \rd \times \mathcal{P}(\rd) \times 
U^x \rightarrow \mathbb{R},
\\
\sigma^x &: \Omega \times [0,T] \times \rd \times \mathcal{P}(\rd)\times 
\mathbb{R}^{d\times(w_x+w_y)} \times \rd\times \mathcal{P}(\rd) \rightarrow 
\mathbb{R}.
\end{aligned}
\end{equation}
$\Theta^x_t$ and $\theta_t^x$ are defined correspondingly.
Distribution-dependence makes \eqref{eq:backward_dynamics} a so-called 
\emph{controlled mean-field BFSDE}. Appendix 
\ref{sec:appendixA} summarizes results on existence of
unique solutions to mean-field BFSDEs and states assumptions strong enough so 
that there exists a unique solution to \eqref{eq:backward_dynamics} given any 
feasible control pair $(u^x_\cdot, u^y_\cdot) \in \mathcal{U}^x\times 
\mathcal{U}^y$. The assumptions are in force throughout this paper.
\begin{remark}
Fundamental diagrams, that describe the marginal relations between speed, 
density and flow in a crowd, are not necessary in the construction of $b^x$ and 
$b^y$, as 
functions of crowd density. It is pointed out in \cite{cristiani2015modeling} 
that the use of fundamental diagrams in pedestrian crowd models is an artifact 
from road traffic models, without proper justification in the case of 
two-dimensional flows. Instead, the more natural (for the purpose of 
modeling pedestrian crowd motion) multidimensional velocity fields $b^x$ and 
$b^y$ are used here.
\end{remark}

\subsection{Pedestrian preferences}

Modeling pedestrian preferences is a delicate task. Not only is 
gathering of and calibration to 
empirical data difficult for many reasons, but different setups lead to vastly 
different mathematical formulations of rationality. The focus in this paper is 
on setups where pedestrian groups are controlled by a central planner. 
Other possible setups are discussed in Section~\ref{sec:conclusions}.
\vspace{0.2cm}\\
The ordinary and tagged pedestrians pay the cost $f^x$ and $f^y$ per time unit, 
respectively,
\begin{equation}
 f^x, f^y : \Omega \times [0,T] \times \rd \times \mathcal{P}(\rd) \times U^y 
\times \mathbb{R}^{d\times(w_x+w_y)} \times \rd \times \mathcal{P}(\rd) \times 
U^x \rightarrow \mathbb{R}.
\end{equation}
On top of this, ordinary pedestrians pay a terminal cost $h^x$ at time $T$, 
and tagged pedestrians pay an initial cost $h^y$ at time $0$,
\begin{equation}
 h^x, h^y : \Omega \times \rd \times \mathcal{P}(\rd) \times \rd \times 
\mathcal{P}(\rd) \rightarrow \mathbb{R}.
\end{equation}
Given a control feasible pair $(u^x_\cdot, u^y_\cdot)$ the total cost is $J^x : 
\mathcal{U}^x \rightarrow \mathbb{R}$ for the 
ordinary pedestrian, and $J^y: 
\mathcal{U}^y \rightarrow \mathbb{R}$ for the tagged,
\begin{equation}
\begin{aligned}
 J^x(u^x_\cdot; u^y_\cdot) &:= \mathbb{E}\left[\int_0^T 
f^x(t,\Theta^y,Z_t,\Theta^x_t)dt + h^x(\theta^y_T, \theta^x_T) \right],
\\
 J^y(u^y_\cdot; u^x_\cdot) &:= \mathbb{E}\left[\int_0^T 
f^y(t,\Theta^y,Z_t,\Theta^x_t)dt + h^y(\theta^y_0, \theta^x_0) \right].
\end{aligned}
\end{equation}
\subsubsection{Mean-field type game}
Consider the following situation. Within each crowd, \textit{pedestrians 
cooperate}, but on a group-level, the \textit{crowds compete}. This 
constitutes a so-called \textit{mean-field type game} between the crowds. A 
Nash equilibrium in the game is a pair of feasible controls, $(\hat{u}^x_\cdot, 
\hat{u}^y_\cdot) \in 
\mathcal{U}^x \times \mathcal{U}^y$, satisfying the inequalities
\begin{equation}
\label{eq:Nash_game}
 \begin{cases}
  J^x(u_\cdot; \hat{u}^y_\cdot) \geq J^x(\hat{u}^x_\cdot; 
\hat{u}^y_\cdot),& \forall\, u_\cdot \in \mathcal{U}^x,
  \\
  J^y(u_\cdot; \hat{u}^x_\cdot) \geq J^y(\hat{u}^y_\cdot; 
\hat{u}^x_\cdot),& \forall\, u_\cdot \in \mathcal{U}^y.
\end{cases}
\end{equation}
The next result is a Pontryagin's type stochastic maximum principle, and yields 
necessary and sufficient conditions for any 
control pair satisfying \eqref{eq:Nash_game}. A proof is provided in 
Appendix~\ref{sec:appendixProof}.
\begin{theorem}
\label{thm:game_smp}
Suppose that $(\hat{u}^x_\cdot, \hat{u}^y_\cdot)$ is a Nash 
equilibrium, i.e. satisfies~\eqref{eq:Nash_game}, and let the regularity 
assumptions of Lemma~\ref{lem:estimate1} be in force. Denote the corresponding 
state processes, given by~\eqref{eq:backward_dynamics}, by $\hat{X}_\cdot$ and 
$(\hat{Y}_\cdot, \hat{Z}_\cdot)$ respectively.  
Let $(p^{xx}_\cdot, q^{xx}_\cdot, q^{xy}_\cdot)$, $p^{xy}_\cdot$,  
$p^{yy}_\cdot$ and $(p^{yx}_\cdot, q^{yx}_\cdot, q^{yy}_\cdot)$ solve the 
adjoint equations
\begin{equation}
\label{eq:adjoint_in_thm}
 \left\{
 \begin{aligned}
  dp^{xx}_t &= -\left\{\partial_x \hat{H}^x_t + 
\mathbb{E}\left[^*(\partial_{\mu^x}\hat{H}^x_t)\right]\right\}dt + 
q^{xx}_tdB^x_t + q^{xy}_tdB^y_t,
  \\
  dp^{xy}_t &= -\left\{\partial_y \hat{H}^x_t + 
\mathbb{E}\left[^*(\partial_{\mu^y}\hat{H}^x_t\right]\right\}dt - \partial_z 
\hat{H}^x_tdB_t,
\\
dp^{yy}_t &= - \left\{ \partial_y \hat{H}^y_t + 
\mathbb{E}\left[^*(\partial_{\mu^y}\hat{H}^y_t)\right]\right\}dt - 
\partial_z\hat{H}^y_tdB_t,
\\
dp^{yx}_t &= -\left\{\partial_x\hat{H}^y_t + 
\mathbb{E}\left[^*(\partial_{\mu^x}\hat{H}^y_t)\right]\right\}dt + 
q^{yx}_tdB^x_t + q^{yy}_tdB^y_t,
\\
p^{xx}_T &= -\left\{\partial_x \hat{h}^x + 
\mathbb{E}\left[^*(\partial_{\mu^x}\hat{h}^x)\right]\right\},\quad p^{xy}_0 = 
0,
\\
p^{yy}_0 &= \partial_y \hat{h}^y + 
\mathbb{E}\left[^*(\partial_{\mu^y}\hat{h}^y)\right], \quad 
p^{yx}_T = 0,
 \end{aligned}
 \right.
\end{equation}
where $H^i$, $i\in\{x,y\}$, is the Hamiltonian, defined by
\begin{equation}
\label{eq:def_hamiltonian}
\begin{aligned}
 H^i(\omega, t, y, \mu^y, v, z, x, \mu^x, u, p^{ix}, p^{iy}, q^{ix}) := 
\sum_{j \in \{x,y\}} b^j(\omega, t, y, \mu^y, v, z, x, \mu^x, u)p^{ij}
 \\ 
 + \sigma^x(\omega,t, y, \mu^y, x, \mu^x)q^{ix}
 -f^i(\omega, t, y, \mu^y, v, z, x, \mu^x, u).
\end{aligned}
\end{equation}
Then
\begin{equation}
\begin{aligned}
 \label{eq:necessary_game_eq}
\hat{u}^x_t 
&= 
\underset{v\in U^x}{\text{argmax}}\ H^x(t, 
\hat{\Theta}^y_t, \hat{Z}_t, \hat{\theta}^x_t,
v, p^{xx}_t, p^{xy}_t, q^{xx}_t), \quad a.e.\text{-}t,\ \mathbb{P}\text{-}a.s.
  \\
  \hat{u}^y_t 
  &=
  \underset{v\in U^y}{\text{argmax}}\ H^y(t, 
\hat{\theta}^y_t, v, \hat{Z}_t, \hat{\Theta}^x_t, p^{yx}_t, p^{yy}_t, 
q^{yx}_t), \quad a.e.\text{-}t,\ \mathbb{P}\text{-}a.s.
\end{aligned}
\end{equation}
If furthermore $H^i$ is concave 
in $(y,\mu^y, v, z, x, \mu^x, u)$, and $h^i$ is convex in 
$(y,\mu^y, x, \mu^x, z)$, $i\in\{x,y\}$, then any feasible control pair 
satisfying~\eqref{eq:necessary_game_eq} is a Nash equilibrium in the 
mean-field type game.
\end{theorem}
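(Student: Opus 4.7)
}
The plan is to reduce the Nash equilibrium problem to two coupled mean-field type optimal control problems, apply a convex perturbation in each, and use the adjoint BSDEs/SDEs in~\eqref{eq:adjoint_in_thm} as duality variables to convert the first-order optimality conditions into the pointwise maximum principle~\eqref{eq:necessary_game_eq}. Specifically, the Nash property means that $\hat{u}^x_\cdot$ is optimal for $J^x(\,\cdot\,;\hat{u}^y_\cdot)$ over $\mathcal{U}^x$ with $\hat{u}^y_\cdot$ frozen, and symmetrically for $\hat{u}^y_\cdot$. Each subproblem is a control problem over the fully coupled mean-field BFSDE~\eqref{eq:backward_dynamics}, in which the controlled parameter appears in both $b^x$ and $b^y$ (but not in $\sigma^x$), which is why each player requires two adjoint processes, one associated with the forward state $X$ and one with the backward state $(Y,Z)$.

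For the necessary direction I would fix a perturbation $u^{x,\epsilon}_\cdot = \hat{u}^x_\cdot + \epsilon(v_\cdot - \hat{u}^x_\cdot)$ with $v_\cdot\in\mathcal{U}^x$ and derive the variational BFSDE satisfied by $(\delta X_\cdot, \delta Y_\cdot, \delta Z_\cdot) := \lim_{\epsilon\downarrow 0}\tfrac{1}{\epsilon}(X^\epsilon - \hat X, Y^\epsilon - \hat Y, Z^\epsilon - \hat Z)$, linearising around the equilibrium trajectory. The differentiability of $b^x,b^y,\sigma^x,f^x,h^x$ in the state and control variables, together with the Lions-differentiability in the measure arguments, is the content of the regularity hypotheses inherited from Lemma~\ref{lem:estimate1} and allows for termwise differentiation; the key technical device is the independent-copy trick $\mathbb{E}[{}^*(\partial_{\mu^i}\hat H^x_t)]$, which is how the measure derivatives are turned into standard random processes after a Fubini exchange. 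Applying Itô's formula to the pairings $\langle p^{xx}_t, \delta X_t\rangle$ and $\langle p^{xy}_t, \delta Y_t\rangle$ on $[0,T]$, the boundary conditions $p^{xx}_T = -\{\partial_x\hat h^x + \mathbb{E}[{}^*(\partial_{\mu^x}\hat h^x)]\}$ and $p^{xy}_0 = 0$ are designed precisely to cancel the boundary contributions of the linearisation of the initial/terminal cost and state conditions; the drift terms of the adjoints are designed to absorb all state- and measure-derivative contributions in the variational cost. What remains is the pointwise control-derivative term, so that $\frac{d}{d\epsilon} J^x(u^{x,\epsilon}_\cdot;\hat u^y_\cdot)\big|_{\epsilon=0}\geq 0$ reduces to $\mathbb{E}\!\int_0^T \langle \partial_u \hat H^x_t, v_t-\hat u^x_t\rangle dt \leq 0$, and a measurable selection argument yields the first identity in~\eqref{eq:necessary_game_eq}. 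The tagged player's problem is entirely symmetric, except that the backward nature of $Y$ swaps the roles of the boundary conditions, producing a forward SDE for $p^{yy}_\cdot$ with initial datum coming from $\partial_y \hat h^y$ and a BSDE for $p^{yx}_\cdot$ with terminal datum zero; the extra $Z$-control is handled by differentiating $b^y,f^y$ in $z$, which generates the martingale integrand $-\partial_z \hat H^y_t dB_t$ in the dynamics of $p^{yy}_\cdot$.

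The sufficient direction follows the classical route: for any admissible $u^x_\cdot$, write $J^x(u^x_\cdot;\hat u^y_\cdot)-J^x(\hat u^x_\cdot;\hat u^y_\cdot)$ as an expectation of the difference in running and boundary costs, apply convexity of $h^x$ in $(y,\mu^y,x,\mu^x,z)$ to bound the boundary term below, and use Itô on $\langle p^{xx}_t, X_t - \hat X_t\rangle$ and $\langle p^{xy}_t, Y_t-\hat Y_t\rangle$ to express the running term as an expectation involving $H^x - \hat H^x$ minus its linearisation around the equilibrium state. Concavity of $H^x$ in $(y,\mu^y,v,z,x,\mu^x,u)$ then bounds this integrand below by $\langle \partial_u \hat H^x_t, u^x_t-\hat u^x_t\rangle$, which is non-positive by the maximum condition~\eqref{eq:necessary_game_eq}; after accounting for signs one concludes $J^x(u^x_\cdot;\hat u^y_\cdot)\geq J^x(\hat u^x_\cdot;\hat u^y_\cdot)$, and the analogous argument handles player $y$. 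I expect the principal obstacle not to be the broad strategy, which is standard, but the careful bookkeeping of the Lions-derivative terms across the four adjoint equations and the need to exchange several expectations to move the starred variables in and out; this is where the independent-copy convention built into the formulation of~\eqref{eq:adjoint_in_thm} does the essential work.
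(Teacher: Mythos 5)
Your overall duality architecture — freezing the opponent's control, linearising the coupled mean-field BFSDE, pairing the variation processes with the four adjoint processes via It\^o's formula so that the boundary conditions in~\eqref{eq:adjoint_in_thm} kill the boundary terms, and handling the Lions derivatives with the independent-copy/Fubini convention — is exactly the paper's strategy, and your sufficiency argument is the same classical convexity computation the paper invokes. However, there is a genuine gap in your necessary direction. You perturb convexly, $u^{x,\epsilon}_\cdot=\hat u^x_\cdot+\epsilon(v_\cdot-\hat u^x_\cdot)$, and arrive at the variational inequality $\mathbb{E}\int_0^T\langle\partial_u\hat H^x_t,\,v_t-\hat u^x_t\rangle\,dt\le 0$. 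This only yields first-order stationarity of $H^x$ in the control; it is equivalent to the global condition $\hat u^x_t=\mathrm{argmax}_{v\in U^x}H^x(\ldots,v,\ldots)$ claimed in~\eqref{eq:necessary_game_eq} only if $U^x$ is convex and $H^x$ is concave and differentiable in $u$ — none of which is assumed in the necessity part of the theorem (concavity enters only in the sufficiency direction, and the paper's hypotheses require only Lipschitz continuity, not differentiability, of $b^x,b^y$ in the controls). Moreover your perturbation need not even stay in $U^x$ if that set is not convex.

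The paper avoids this by using a spike (needle) variation: $\bar u^{x,\epsilon}_t$ equals an arbitrary $u_t\in U^x$ on a set $E_\epsilon$ of measure $\epsilon$ and $\hat u^x_t$ elsewhere. The same adjoint/It\^o bookkeeping then gives
\begin{equation}
J^x(\bar u^{x,\epsilon}_\cdot;\hat u^y_\cdot)-J^x(\hat u^x_\cdot;\hat u^y_\cdot)
=-\mathbb{E}\left[\int_0^T \delta_x H^x(t)\,1_{E_\epsilon}(t)\,dt\right]+o(\epsilon)\ \ge 0,
\end{equation}
where $\delta_xH^x(t)$ is the \emph{finite} difference of the Hamiltonian between the perturbed and equilibrium controls; shrinking $E_\epsilon$ yields $\delta_xH^x(t)\le 0$ for every admissible $u$, a.e.-$t$, a.s., which is the full argmax statement. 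This is also why Lemma~\ref{lem:estimate1} assumes Lipschitz continuity in the controls rather than differentiability: it is exactly what is needed to estimate the effect of the spike. To repair your argument, replace the convex perturbation by the spike variation (or add convexity of $U^x,U^y$ and concavity of $H^i$ in $u$ as standing assumptions, which would weaken the theorem).
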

\begin{remark}
 Note that in the mean-field type game, the tagged can be thought of as a 
\textit{major 
player}, influencing the ordinary crowd. Furthermore, the central 
planner has access to a model of the ordinary crowd and this is necessary for 
the determination of the equilibrium control.
\end{remark}

\subsubsection{Optimal control of mean-field type}
If the central planner does not have access to a model of the crowd of 
ordinary 
pedestrians, any interaction with the surrounding environment will then 
enter the tagged model as a random signal. This is covered by 
the $(\omega,t)$-dependence of $b^y$ and $f^y$. The mean-field type game 
then reduces to a so-called \textit{optimal control problem of mean-field type},
\begin{equation}
\label{eq:backward_CP}
\left\{
 \begin{aligned}
  \underset{u^y_\cdot \in\mathcal{U}^y}{\text{min}}&\ \mathbb{E}\left[\int_0^T 
f^y(t, \Theta^y_t, Z_t)dt + h^y(\theta^y_0, Z_0)\right],
  \\
  \text{s.t.}&\ \ dY_t = b^y(t, \Theta^y_t, Z_t)dt + Z_ tdB_t,
  \\
  &\ \ Y_T = y_T.
  \end{aligned}
 \right.
\end{equation}
Problem \eqref{eq:backward_CP} is a special case of 
\eqref{eq:Nash_game} and necessary and sufficient conditions follow as a 
corollary to Theorem~\ref{thm:game_smp}.
\begin{corollary}
\label{thm:nec_back_smp}
Suppose that $\hat{u}^y_\cdot$ solves~\eqref{eq:backward_CP} 
and denote the corresponding tagged state $(\hat{Y}_\cdot, \hat{Z}_\cdot)$. Let 
$p_\cdot$ solve the adjoint equation
\begin{equation}
\label{eq:adjoint_in_corr}
 \left\{
 \begin{aligned}
  dp_t &= -\left\{\partial_y\mathcal{H}(t, \hat{\Theta}^y_t, \hat{Z}_t, p_t) 
+ \mathbb{E}\left[^*(\partial_\mu 
\mathcal{H}(t, \hat{\Theta}^y_t, \hat{Z}_t, p_t)\right]\right\}dt
\\
& \quad
- \partial_z \mathcal{H}(t, \hat{\Theta}^y_t, \hat{Z}_t, p_t) dB_t,
\\
p_0 &= \partial_yh^y(\hat{\theta}^y_0) + \mathbb{E}\left[^*(\partial_\mu 
h^y(\hat{\theta}^y_0)\right],
 \end{aligned}
 \right.
\end{equation}
where $\mathcal{H}$ is the Hamiltonian
\begin{equation}
 \mathcal{H}(\omega,t,y,\mu,v,z,p) := b^y(\omega,t,y,\mu,v,z)p - 
f^y(t,y,\mu,v,z).
\end{equation}
Then
\begin{equation}
\label{eq:NBSMP_eq}
\hat{u}^y_t = \underset{v\in U^y}{\text{argmax}}\ \mathcal{H}(t, 
\hat{\theta}^y_t, v, \hat{Z}_t, p_t), \quad a.e.\text{-}t,\ 
\mathbb{P}\text{-}a.s.
\end{equation}
If moreover $\mathcal{H}$ is concave in $(y,\mu, v, z)$ and that $h$ is 
convex in 
$(y,\mu)$, then any feasible $\hat{u}^y_\cdot$ that satisfies 
\eqref{eq:NBSMP_eq}, almost surely for 
a.e. 
$t$, is an optimal control to~\eqref{eq:backward_CP}.
\end{corollary}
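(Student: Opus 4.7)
The plan is to deduce Corollary \ref{thm:nec_back_smp} as a genuine specialization of Theorem \ref{thm:game_smp}, rather than redoing a variational argument. Concretely, view the control problem \eqref{eq:backward_CP} as the degenerate mean-field type game in which the ordinary pedestrian is absent: the forward state equation and the control $u^x$ are removed, and the coefficients $b^y$, $f^y$, $h^y$ carry no dependence on $x$, $\mu^x$ or $u^x$. The tagged's cost then coincides with the objective in \eqref{eq:backward_CP}, while $J^x$ is vacuous and the corresponding Nash inequality in \eqref{eq:Nash_game} is automatically satisfied; hence a minimizer $\hat u^y_\cdot$ of \eqref{eq:backward_CP} is, together with any admissible $\hat u^x_\cdot$, a Nash equilibrium for the reduced game.

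Next I would collapse the adjoint system \eqref{eq:adjoint_in_thm}. The equations for $(p^{xx},q^{xx},q^{xy})$ and $p^{xy}$ concern the optimality of the (absent) ordinary player and play no role in characterizing $\hat u^y$, so they can be discarded. The cross-adjoint $(p^{yx},q^{yx},q^{yy})$ is driven by $\partial_x H^y$ and $\partial_{\mu^x} H^y$; because $H^y$ does not depend on $x$ or $\mu^x$ after the reduction, and since $p^{yx}_T=0$, uniqueness gives $p^{yx}\equiv 0$ and $q^{yx}\equiv q^{yy}\equiv 0$. All that survives is the equation for $p^{yy}$, which I rename $p$. Inspection shows the Hamiltonian $H^y$ in \eqref{eq:def_hamiltonian} reduces to $\mathcal{H}(\omega,t,y,\mu,v,z,p)=b^y p - f^y$, and the dynamics and initial condition for $p^{yy}$ in \eqref{eq:adjoint_in_thm} become exactly \eqref{eq:adjoint_in_corr}.

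With this bookkeeping in place, the necessary condition \eqref{eq:necessary_game_eq} for $\hat u^y$ specializes word-for-word to the maximum condition \eqref{eq:NBSMP_eq}. For sufficiency, one checks that concavity of $\mathcal{H}$ in $(y,\mu,v,z)$ and convexity of $h^y$ in $(y,\mu)$ lift to the hypotheses of Theorem \ref{thm:game_smp}: viewed as functions of $(y,\mu^y,v,z,x,\mu^x,u)$, both $H^y$ and $h^y$ are constant in the $x$-, $\mu^x$- and $u$-slots, hence automatically concave/convex there, so the sufficiency clause of Theorem \ref{thm:game_smp} applies and yields that any admissible $\hat u^y$ satisfying \eqref{eq:NBSMP_eq} is optimal for \eqref{eq:backward_CP}.

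The only point requiring care is the reduction of the adjoint system, in particular justifying that the cross terms $(p^{yx},q^{yx},q^{yy})$ vanish and that $p^{yy}$ in the reduced problem satisfies the BSDE stated in \eqref{eq:adjoint_in_corr}; this is routine uniqueness applied to a linear BSDE with zero generator and zero terminal value, given the regularity assumed in Lemma \ref{lem:estimate1}. I expect no substantive obstacle beyond this bookkeeping, since the analytical heavy lifting — the variational calculus and the mean-field derivative handling — is already carried out in the proof of Theorem \ref{thm:game_smp}.
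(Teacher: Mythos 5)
Your proposal is correct and follows exactly the route the paper takes: the text explicitly states that problem \eqref{eq:backward_CP} is a special case of the game \eqref{eq:Nash_game} and that the corollary follows by specializing Theorem~\ref{thm:game_smp}, which is precisely your reduction (dropping the ordinary player, observing that $p^{yx}\equiv 0$ with vanishing martingale integrands by uniqueness of the zero-generator, zero-terminal-value BSDE, and identifying $p^{yy}$ with $p$). Your write-up in fact supplies more of the bookkeeping than the paper does, but the underlying argument is the same.
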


\section{Simulations}
\label{sec:distributed}

This section is devoted to numerical simulations of the tagged model 
under various external influences and internal preferences. First, two 
scenarios in the optimal control version of the model are considered, including 
preferences on velocity, avoidance and interaction via the mean position of the 
group. Secondly, asymmetric bidirectional flow is simulated with the 
full mean-field type game version of the model. None of 
the parameters used in the simulations stem from real world measurements, but 
velocity profiles  and the asymmetric bidirectional flow are compared 
qualitatively with the experimental studies
\cite{moussaid2009experimental} and \cite{zhang2012ordering}. 

\subsection{Optimal control: keeping a tagged group together}
\label{sec:keeptogether}

In this scenario, the common goal of the tagged pedestrians is to stay close to 
the group mean, while conserving energy and initiating in the proximity of 
$y_0\in\mathbb{R}^2$. Distance to the group mean is one of the simplest 
mean-field effects that can be considered. Nonetheless it is a 
distribution-dependent quantity, and the control problem characterizing tagged 
pedestrian behavior in this scenario is the nonstandard optimization problem 
\eqref{eq:test0}. The components of \eqref{eq:test0} are presented in 
Table~\ref{table:keep_together}. There is no surrounding crowd present, only 
tagged pedestrians occupy the space. 
\begin{equation}
\label{eq:test0}
\left\{
 \begin{aligned}
  \underset{u_\cdot \in\mathcal{U}}{\text{min}}&\ \frac{1}{2}\x{\int_0^T \left(
\lambda_{\text{cont}}\left(u_t^y\right)^2 + \lambda_{\text{attr}}\left(Y_t - 
\mathbb{E}[Y_t]\right)^2 \right) dt + 
\lambda_{\text{init}}(Y_0 - 
y_0)^2},
  \\
  \text{s.t.}&\ \ dY_t = (u_t^y + \lambda_{\text{noise}}B_t)dt + Z_ tdB_t,
  \\
  &\ \ Y_T = y_T.
  \end{aligned}
 \right.
\end{equation}
\begin{center}
\begin{table}[h]
\ra{1.3}
\begin{tabular}{@{}llllrrrr@{}}
\toprule
\textbf{Velocity component} & \textbf{Form}
\\ 
\midrule
Internal velocity (control)  & $u_\cdot^y \in \mathcal{U}^y$
\\
Acceleration noise  & $\lambda_{\text{noise}} B_\cdot^y$
\vspace{0.2cm} \\
\toprule
\textbf{Preference (penalty)} & \textbf{Form}
\\ 
\midrule
Energy usage per unit time & $\lambda_{\text{cont}} \left(u_\cdot^y\right)^2$
\\
Distance from group mean per unit time & $\lambda_{\text{attr}}
\displaystyle\left(Y_\cdot - \mathbb{E}[Y_\cdot]\right)^2$
\\
Distance from $y_0\in \mathbb{R}^2$ at $t=0$ & $\lambda_{\text{init}}(Y_0 - 
y_0)^2$
\\
\bottomrule
\end{tabular}
\vspace{0.2cm}\\
\caption{Keeping a tagged group together: components}
\label{table:keep_together}
\end{table}
\end{center}
\vspace{-.8cm}
The scenario is simulated for two sets of parameters, 
see Table~\ref{table:2paraSet}. The mean-field BFSDE system of 
equations characterizing optimal behavior, given by 
Corollary~\ref{thm:nec_back_smp}, is solved with the least-square Monte Carlo 
method of \cite{bender2008time}. The result is presented in Figure~\ref{fig:1}. 
The group walks approximately on the straight line from the starting area to 
the target point. Remember that the initial position of the tagged is chosen 
rationally by solving \eqref{eq:test0}. There is a trade-off between starting 
close to $y_0$ and walking with high speed, and the groups rationally initiates 
not at $y_0$, but somewhere between $y_0$ and $y_T$. The group that prefers 
proximity to other group members does indeed move in a more compact formation. 
The difference appears clearly when looking at the mean distance-to-mean of the 
tagged group, see Figure~\ref{fig:dist-to-mean}.
\begin{center}
\begin{table}[h]
\ra{1.3}
\begin{tabular}{@{}lllllllll@{}}
  & $\lambda_{\text{noise}}$ & $\lambda_{\text{cont}}$ & 
$\lambda_{\text{attr}}$ & $\lambda_{\text{init}}$ & $\mathbb{E}[y_0]$ & $y_T$ & 
$T$
  \\ 
  \midrule
  \textbf{Set 1 (with distance-to-mean penalty)} & 1 & 50 & 50 & 
10 & 
[0.1,0.1] & [2,2] & 1
  \\ 
  \textbf{Set 2 (without distance-to-mean penalty)} & 1 & 50 & 0 & 
10 & 
[0.1,0.1] & [2,2] & 1
  \\
\bottomrule
\end{tabular}
\vspace{0.2cm}\\
\caption{Keeping a tagged group together: parameter values. The preferred 
initial position is normally distributed around $[0.1, 0.1]$.}
\vspace{-0.8cm}
\label{table:2paraSet}
\end{table}
\end{center}
\begin{figure}[h!]
\label{fig:m2m_density}
  \label{fig:simulation}
 \includegraphics[scale = 0.7, trim = 2cm 0cm 0cm 0cm]{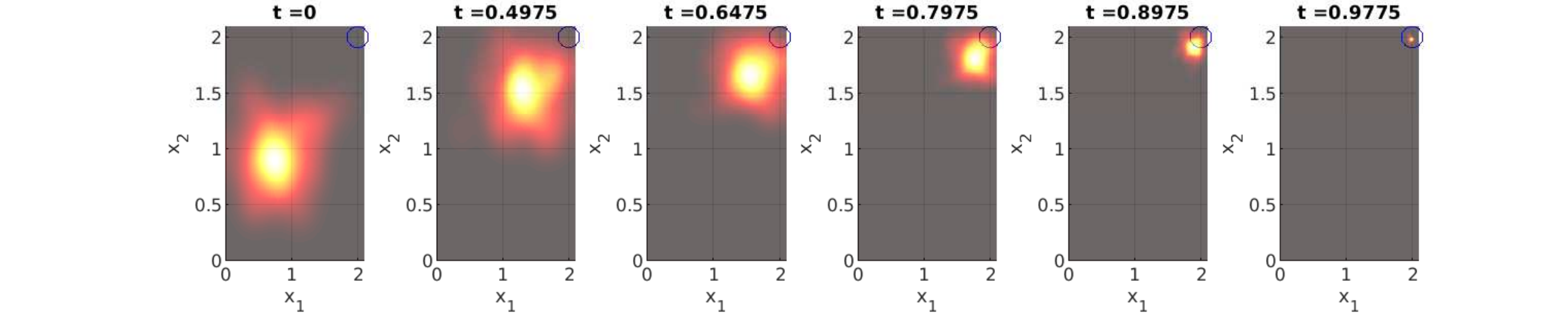}
 \includegraphics[scale = 0.7, trim = 2cm 0cm 0cm 0cm]{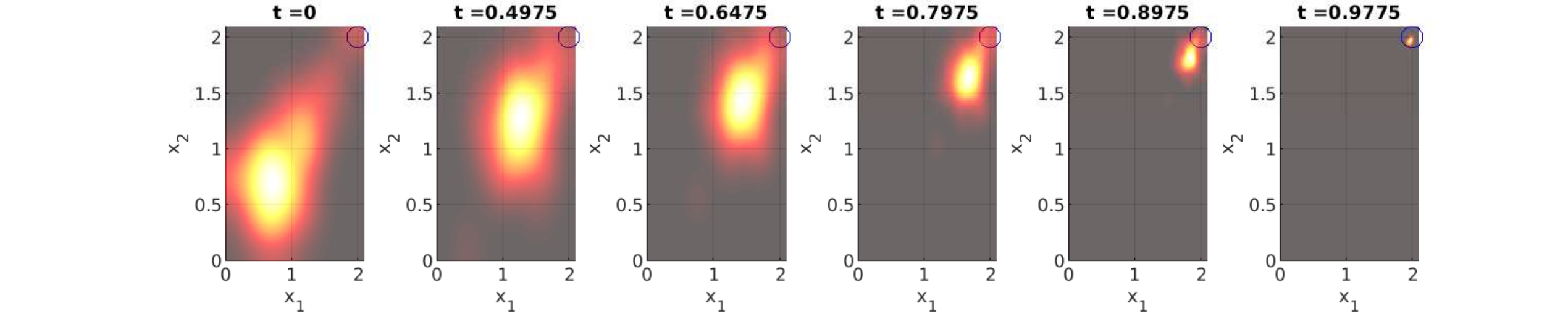}
 \caption{\textup{Top row:} Crowd density evolution when $\lambda_2 = 50$. 
\textup{Bottom row:} Crowd density evolution when $\lambda_2 = 0$.}
\label{fig:1}
\end{figure}
\begin{figure}[h!]
  \label{fig:m2m_distance}
  \includegraphics[scale = 0.45]{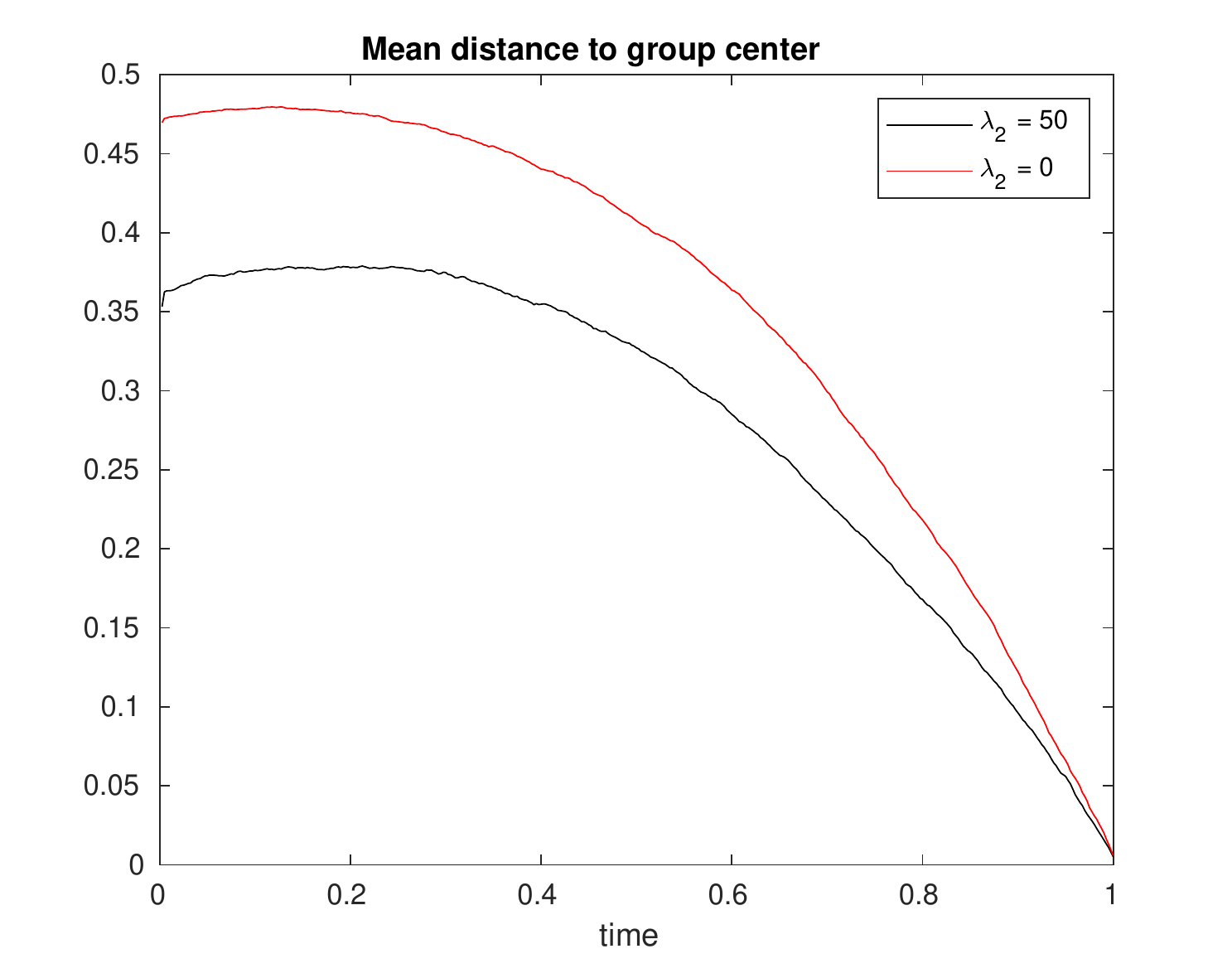}
 \caption{A tagged group acting under preferences with a 
distance-to-mean penalty indeed moves in a more compact formation than a group 
that does not.}
  \label{fig:dist-to-mean}
\end{figure}

\subsection{Optimal control: desired velocity}

Linear-quadratic scenarios have accessible closed form solutions by the method 
of matching. We want to mention the case where the tagged's goal is to move at 
its desired velocity $v_{\text{des}}$, similar to what was originally 
introduced as \textit{the desired speed and direction} in 
\cite{helbing1995social}. This is an important special case, since desired 
velocity is measurable in live experiments. See for example
\cite{moussaid2009experimental} for the speed profile of pedestrian walking in 
a straight corridor, starting from standing still. 
The scenario is formulated as standard optimal control problem, 
\eqref{eq:test1}, and 
the setting is summarized in Table~\ref{table:desvel}. 
\begin{equation}
\label{eq:test1}
\left\{
 \begin{aligned}
  \underset{u \in\mathcal{U}[0,T]}{\text{min}}&\ \frac{1}{2}\x{\int_0^T \left(
\lambda_{\text{cont}}\left(u_t^y\right)^2 + \lambda_{\text{des}}\left(u_t^y - 
v_{\text{des}}(t)\right)^2 + \lambda_{\text{rep}}(Y_t - 
Q)^2\right) dt},
  \\
  \text{s.t.}&\ \ dY_t = (u^y_t + \lambda_{\text{noise}}B^y_t)dt + Z_ tdB_t,
  \\
  &\ \ Y_T = y_T.
  \end{aligned}
 \right.
\end{equation}
\begin{table}[h]
\centering
\ra{1}
\begin{tabular}{@{}llllrrrr@{}}
\toprule
\textbf{Velocity component} & \textbf{Form}
\\ 
\midrule
Internal velocity (control)  & $u^y_\cdot \in \mathcal{U}^y$
\\
Acceleration noise  & $\lambda_{\text{noise}} B_\cdot$
\vspace{0.2cm} \\
\toprule
\textbf{Preference} & \textbf{Form}
\\ 
\midrule
Energy usage per unit time & $\lambda_{\text{cont}} \left(u^y_\cdot\right)^2$
\\
Uncomfortable velocity per unit time & $\lambda_{\text{des}}\left(u^y_\cdot - 
v_{\text{des}}(t)\right)^2$
\\
Distance from $Q \in \mathbb{R}^2$ per unit time & 
$\lambda_{\text{rep}}(Y_\cdot - Q)^2$
\\
\bottomrule
\end{tabular}
\vspace{0.2cm}\\
\caption{Desired velocity: components}
\label{table:desvel}
\end{table}\\ \noindent
In view of Corollary \ref{thm:nec_back_smp}, the optimal control is
\begin{equation}
\hat{u}^y_t = \frac{p_t + 
\lambda_{\text{des}}v_{\text{des}}(t)}{\lambda_{\text{cont}} + 
\lambda_{\text{rep}}},
\end{equation}
With the ansatz $\hat{Y}_t = \gamma(t)p_t + \eta(t)B^y_t + \theta_t$, 
$\gamma(T) 
= \eta(T) = 0$ and $\theta(T) = y_T$, a matching argument gives the optimally 
controlled dynamics up to a system of ODEs:
\begin{equation}
 \begin{cases}
  \frac{d}{dt}\gamma(t) = -\lambda_{\text{rep}}\gamma(t)^2 + 
\frac{1}{\lambda_{\text{cont}} + 
\lambda_{\text{des}}}, &\gamma(T) = 0,
  \\
  \frac{d}{dt}\eta(t) = - \lambda_{\text{rep}}\gamma(t)\eta(t) + 
\lambda_{\text{noise}}, &\eta(T) 
= 0,
  \\
  \frac{d}{dt}\theta(t) = -\lambda_{\text{rep}}\gamma(t)(\theta(t) - Q) + 
\frac{\lambda_{\text{des}}v_{\text{
des}}(t)}{\lambda_{\text{cont}}+\lambda_{\text{des}}} , 
&\theta(T) = 
y_T,
 \\
 \hat{Z}_t = \eta(t).
 \end{cases}
\end{equation}
In Figure~3, the simulated tagged crowd density is presented for two 
values of 
$Q$. The desired velocity is set to be negative in both directions for 
$t\in[0,T/2]$, and positive for $t \in(T/2, T]$, which corresponds to a 
preference to first move south-west during the first half of the time period 
and then turn around and move north-east. The parameter $\lambda_{\text{rep}}$ 
is set to a negative value, hence the tagged prefers to avoid $Q$. Parameters 
used in the simulation are summarized in Table~\ref{table:desvel_para}.
The trade-off between walking in the desired velocity and walking close to 
the diamond $Q$ before reaching the target circle is clearly visible.  Recall 
that the initial position is determined by the optimization procedure. In 
this scenario, there is no preference on initial position and the 
tagged group compensates for the location of $Q$ by changing its initial 
position!
\vspace{0.2cm}\\
In \cite{moussaid2009experimental} the average time-dependent 
velocity of a pedestrian initially standing still is measured experimentally in 
the absence of interactions. The result is a relationship between speed and 
time, than can be used as data for $v_{\text{des}}$. Approximating the graph 
presented in \cite{moussaid2009experimental} with 
\begin{equation}
\label{eq:vdesss}
 v_{\text{des}}(t) = \max\{0.1, \text{arctan}(\pi t - 1.6)\},
\end{equation}
the scenario is simulated with the two presented in 
Table~\ref{table:desvel_para_2}. The result is presented in 
Figure~4. The parameter set with a higher penalty on from deviation from 
desired velocity naturally results in a velocity profile closer to 
$v_{\text{des}}(t)$.
\begin{center}
\begin{table}[h!]
\ra{1.3}
\begin{tabular}{@{}lllllllllll@{}}
  & $\lambda_{\text{noise}}$ & $\lambda_{\text{cont}}$ & $\lambda_{\text{des}}$ 
& $\lambda_{\text{rep}}$ & 
$Q$ & $v_{\text{des}}(t)$ & $y_T$ & $T$
  \\ 
  \midrule
  \textbf{Set 1} & 0.5  & 0.5 & 1 & -2 & 
$[-0.5,-0.5]$ & $\text{sign}\left(t-\frac{T}{2}\right)[3,3]$ & $[0.1,0.1]$ & 1
  \\
  \textbf{Set 2} & 0.5  & 0.5 & 1 & -2 & 
$[1.5,1.5]$ & $\text{sign}\left(t-\frac{T}{2}\right)[3,3]$ & $[0.1,0.1]$ & 1
  \\
\bottomrule
\end{tabular}
\vspace{0.2cm}\\
\caption{Desired velocity: parameter values.}
\label{table:desvel_para}
\end{table}
\end{center}
\vspace{-1cm}
\begin{figure}[h!]
\hspace*{-1.5cm}                                                          
 \includegraphics[scale = 0.7, trim = 0cm 0cm 0cm 0cm]{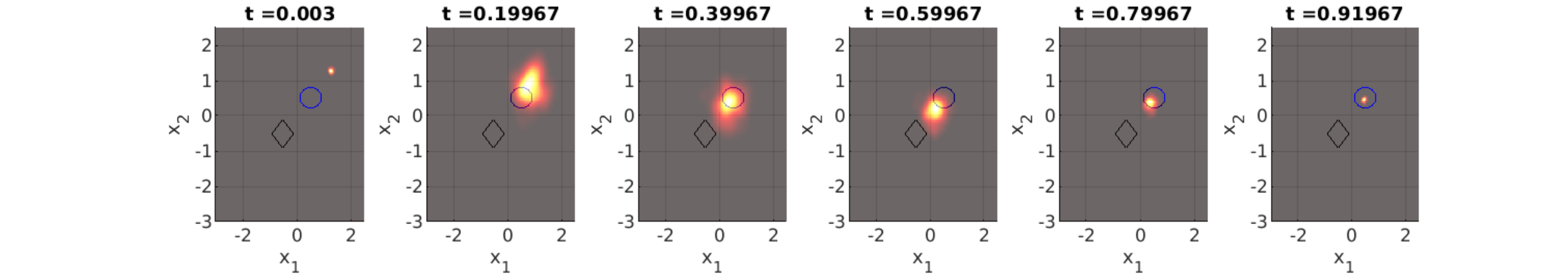} 
 \hspace*{-1.5cm}                                                          
  \includegraphics[scale = 0.7, trim = 0cm 0cm 0cm 0cm]{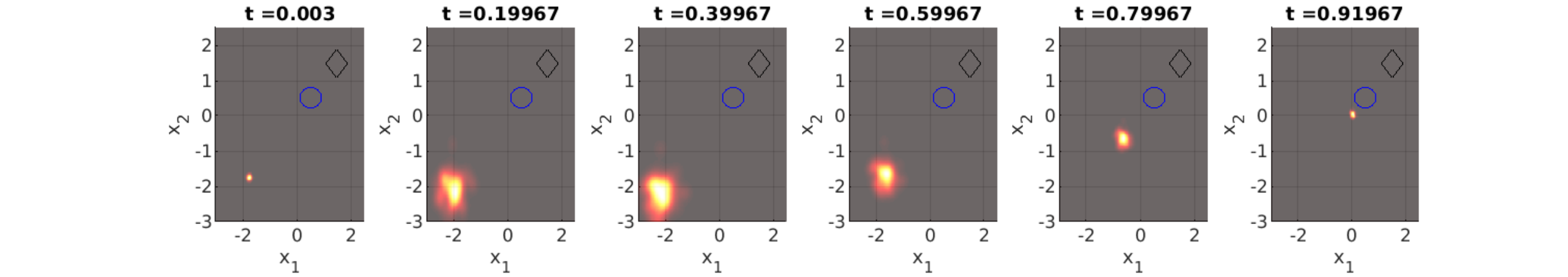}
 \label{fig:des}
 \caption{\emph{Top row:} Desired velocity with Set 1 parameters from 
Table~\ref{table:desvel_para}. \emph{Bottom row:} Desired velocity with Set 2 
parameters from 
Table~\ref{table:desvel_para}. The tagged crowd
moves first south-west and then north-east, following its desired velocity,
while avoiding the diamond on its way to the circle.}
\end{figure}
\begin{center}
\begin{table}[h!]
\ra{1.3}
\begin{tabular}{@{}lllllllllll@{}}
  & $\lambda_{\text{noise}}$ & $\lambda_{\text{cont}}$ & $\lambda_{\text{des}}$ 
& $\lambda_{\text{rep}}$ & 
$Q$ & $v_{\text{des}}(t)$ & $y_T$ & $T$
  \\ 
  \midrule
  \textbf{Set 3} & 0.1  & 0.5 & 2 & 0 & 
$[0,0]$ & Eq. \eqref{eq:vdesss} & $[0,0]$ & 4
  \\
  \textbf{Set 4} & 0.1  & 0.5 & 10 & 0 & 
$[0,0]$ & Eq. \eqref{eq:vdesss} & $[0,0]$ & 4
  \\
\bottomrule
\end{tabular}
\vspace{0.2cm}\\
\caption{Desired velocity: parameter values.}
\label{table:desvel_para_2}
\end{table}
\end{center}
\vspace{-1cm}
\begin{figure}                                                       
\begin{minipage}{0.49\linewidth}
\hspace*{1cm}                                                          
\includegraphics[scale = 0.5, trim = 2cm 0cm 0cm 0cm]{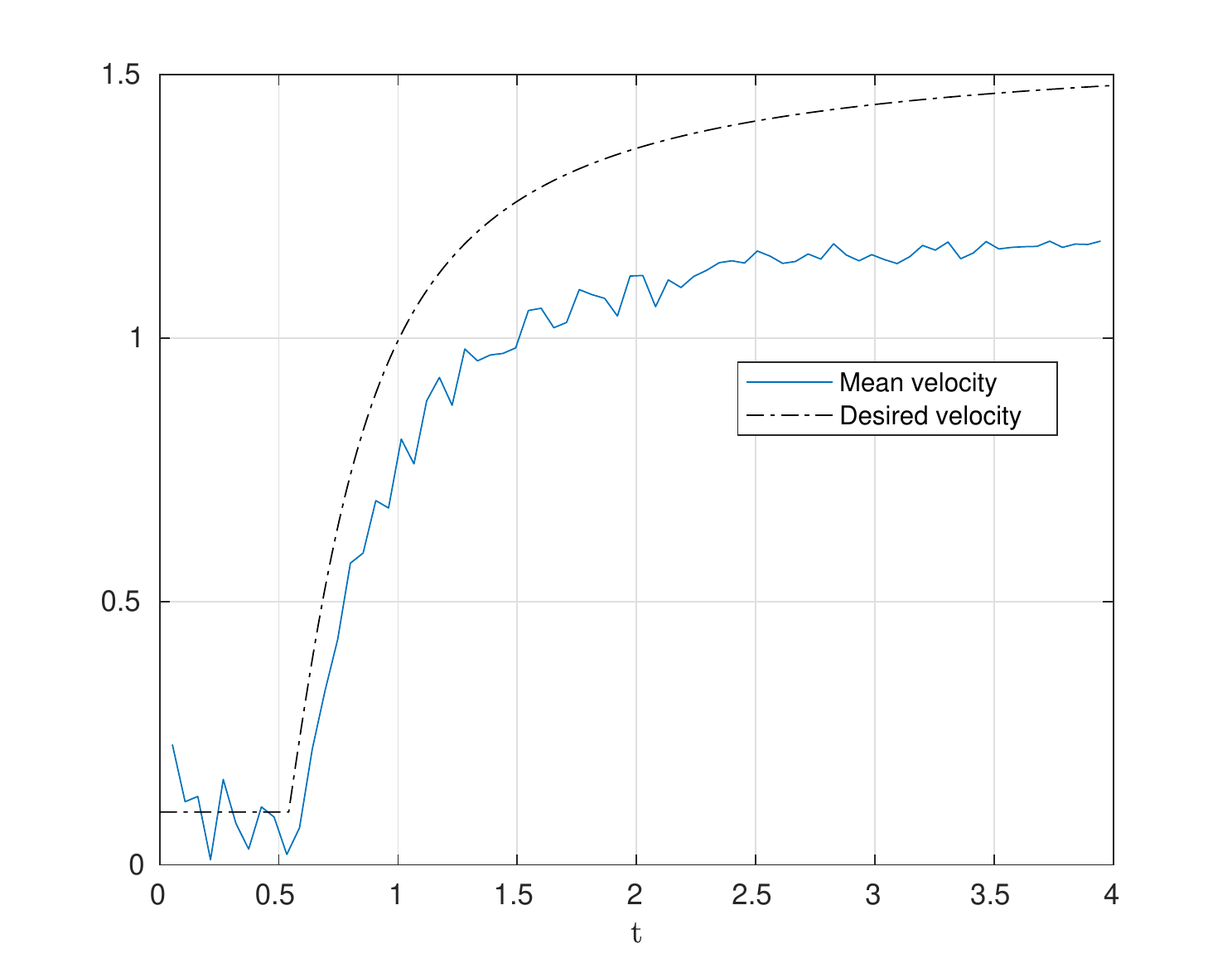}
\end{minipage}
\hfill
\begin{minipage}{0.49\linewidth}
\hspace*{.9cm}                                                          
\includegraphics[scale = 0.5, trim = 2cm 0cm 0cm 0cm]{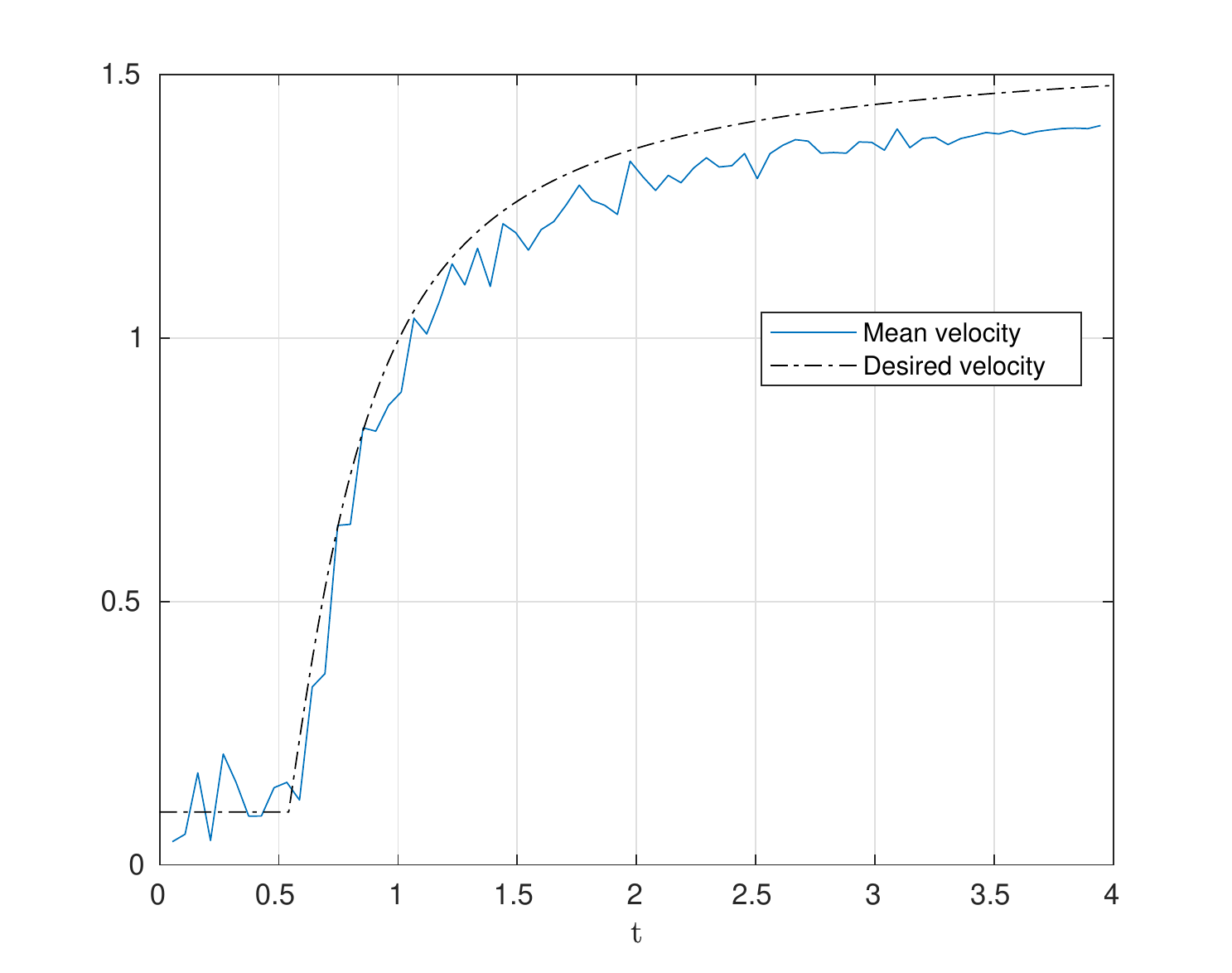}
\end{minipage}
\label{fig:desvol_2}
\caption{\emph{Left:} Desired velocity with Set 3 parameters from 
Table~\ref{table:desvel_para_2}. \text{Right:} Desired velocity with Set 4 from 
Table~\ref{table:desvel_para_2}. The mean velocity measured 
in the scenario 
\eqref{eq:test1} compared to the desired velocity \eqref{eq:vdesss}.}
\end{figure}
\subsection{Mean-field type game: asymmetric bidirectional flow}

Consider now a scenario where ordinary pedestrians initiate at $x_0\in 
\mathbb{R}^2$, close $y_T\in\mathbb{R}^2$, the location of an incident. They 
begin to walk towards the safe spot $x_T\in\mathbb{R}^2$. A tagged pedestrian is 
to end up at the location of the incident $y_T$ at time $t=T$. The tagged 
pedestrian is repelled by the mean of the ordinary pedestrian crowd, while the 
ordinary pedestrian crowd is repelled by the tagged pedestrian. This scenario 
is implemented as the MFTG \eqref{eq:asymmetric}, summarized in 
Table~\ref{table:assym}.
\begin{equation}
\label{eq:asymmetric}
 \begin{cases}
  J^x(u^x_\cdot; u^y_\cdot) = \frac{1}{2}\mathbb{E}\left[\int_0^T \left(
\lambda^x_{\text{cont}}\left(u^x_t\right)^2 + 
\lambda^x_{\text{rep}}(X_t-Y_t)^2\right)dt + 
\lambda^x_{\text{term}}(X_T-x_T)^2  \right],
  \\
  dX_t = u_t^xdt + \sigma dB^x_t,\quad \sigma\in\mathbb{R},\quad X_0 = x_0,
  \\
  J^y(u^y_\cdot; u^x_\cdot) = \frac{1}{2}\mathbb{E}\left[\int_0^T 
\left(\lambda^y_{\text{cont}}\left(u^y_t\right)^2 + \lambda^y_{\text{red}}(Y_t 
- \mathbb{E}[X_t])^2\right)dt + \lambda^y_{\text{init}}(Y_0-y_0)^2 \right],
  \\
  dY_t = \left(u_t^y + \lambda^y_{\text{noise}}dB^y_t\right) + Z_t dB_t.
 \end{cases}
\end{equation}
\begin{table}[h]
\centering
\ra{1}
\begin{tabular}{@{}llllrrrr@{}}
\toprule
ORDINARY PEDESTRIAN &  & TAGGED PEDESTRIAN & 
\\ 
\midrule
\textbf{Velocity component} & \textbf{Form} & \textbf{Velocity component} & 
\textbf{Form}
\\ 
\midrule
Internal velocity (control)  & $u^x_\cdot \in \mathcal{U}^x$ & Internal velocity 
(control) & $u^y_\cdot \in \mathcal{U}^y$
\\~\\
Diffusion noise level 		    & $\sigma$ &  Acceleration noise & 
$\lambda_{\text{noise}}^yB^y_\cdot$
\vspace{0.2cm} \\
\toprule
\textbf{Preference} & \textbf{Form} & \textbf{Preference} & \textbf{Form}
\\ 
\midrule
Energy usage per unit time & $\lambda^x_{\text{cont}} \left(  
u^x_\cdot\right)^2$ & Energy usage per unit time & $\lambda^y_{\text{cont}} 
\left(  
u^y_\cdot\right)^2$
\\~\\
Repulsion per unit time& $\lambda^x_{\text{rep}}(X_\cdot - Y_\cdot)^2$ & 
Repulsion per unit time& $\lambda^y_{\text{rep}}(Y_\cdot - 
\mathbb{E}[X_\cdot])^2$
\\~\\
Proximity to $x_T$ at $t=T$ & $\lambda_{\text{term}}^x(X_T - 
x_T)^2$ & Proximity to $y_0$ at $t=0$ & $\lambda_{\text{init}}^y(Y_0 - 
y_0)^2$
\\
\bottomrule
\end{tabular}
\vspace{0.2cm}\\
\caption{Asymmetric bidirectional flow: components}
\label{table:assym}
\end{table}
\\
In Figure~5 and Figure~6 the scenario is simulated for the parameter sets 
presented in Table~\ref{table:assym_para}. In Figure~5, the ordinary and the 
tagged do not have to cross paths to go from their initial to their 
terminal positions. The simulated paths (top plot of Figure~5) are similar in 
shape to both the outcome of the corridor experiment under 'condition 3' (no 
obstacle) of \cite{moussaid2009experimental} and the BFR-SSL experiment of 
\cite{zhang2012ordering}. These experimental studies were conducted in a 
controlled environment that is outside the tagged model presented 
in this paper. Anyhow, the tagged model replicates the separation of lanes in a 
bidirectional pedestrian flow and the uncertainty that pedestrian motion 
exhibits. In the 
density snapshots (bottom row of Figure~5) reveal that in simulated scenario, 
the tagged tagged moves in almost constant velocity towards $y_T$, while the 
ordinary group lingers a while at $x_0$ before it starts to move towards $x_T$.
In Figure~6, the tagged's and the ordinary's straight path from initiate 
position to target cross each other. In this scenario, the ordinary pedestrians 
resolve this by taking walking in a half-circle around the tagged, before 
moving towards their preferred terminal position $x_T$.

\begin{center}
\begin{table}[h]
\ra{1.3}
\begin{tabular}{@{}lllllllll@{}}
 \toprule
  TAGGED PEDESTRIAN & $\lambda^y_{\text{noise}}$ & $\lambda^y_{\text{cont}}$ & 
$\lambda^y_{\text{rep}}$ & $\lambda^y_{\text{init}}$ & $y_0$ & 
$\mathbb{E}[y_T]$ & $T$
  \\ 
  \midrule
  \textbf{Bidirectional flow} & 0.7 & 1 & -2 & 3 & 
[0,1] & [10,0] & 1
  \\ 
  \textbf{Twist} & 0.7 & 1 & -1 & 3 & 
[0,-3] & [10,-1] & 1
  \\
\midrule
ORDINARY PEDESTRIAN & $\sigma$ & $\lambda^x_{\text{cont}}$ & 
$\lambda^x_{\text{rep}}$ & $\lambda^x_{\text{end}}$ & $\mathbb{E}[x_0]$ & $x_T$ 
& $T$
\\ 
  \midrule
  \textbf{Bidirectional flow} & 0.7 & 1 & -1 & 10 & 
[10,-1]  & [0,0] & 1
\\ 
  \midrule
  \textbf{Twist} & 0.7 & 1 & -1.7 & 10 & 
[10,-2] & [0,0] & 1
\\
\bottomrule
\end{tabular}
\vspace{0.2cm}\\
\caption{Asymmetric bidirectional flow: parameters values. The initial and 
terminal constraints are normally distribution with mean tabled above, and 
standard deviation $0.3$ and $0.1$ for the tagged and ordinary pedestrian, 
respectively.}
\label{table:assym_para}
\end{table}
\end{center}
\begin{figure}
\label{fig:bidir}
 \includegraphics[scale = 0.3, trim = 2cm 1cm 0cm 
1.5cm]{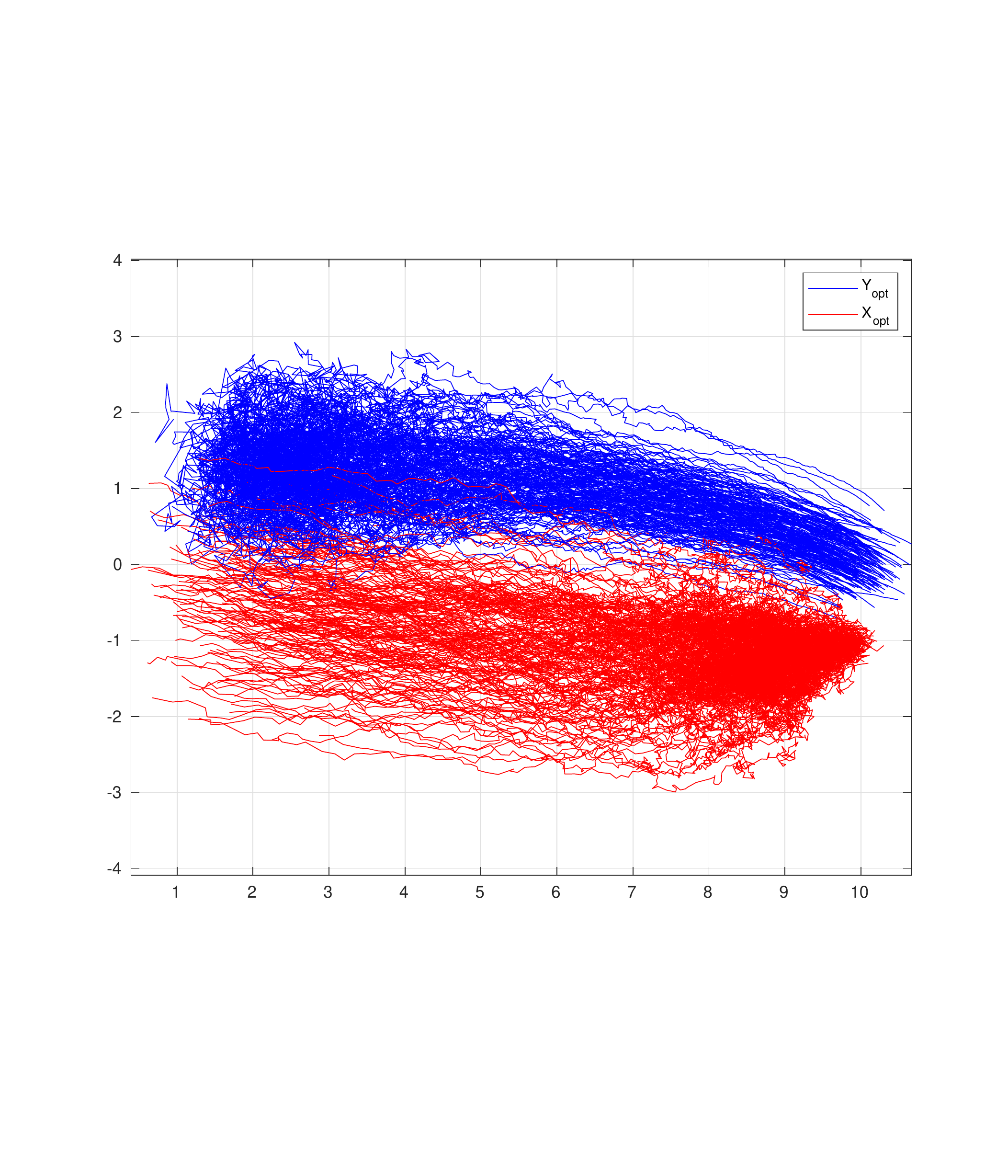}
 \includegraphics[width=1.1\textwidth, trim = 4cm 3.5cm 0cm 
3cm]{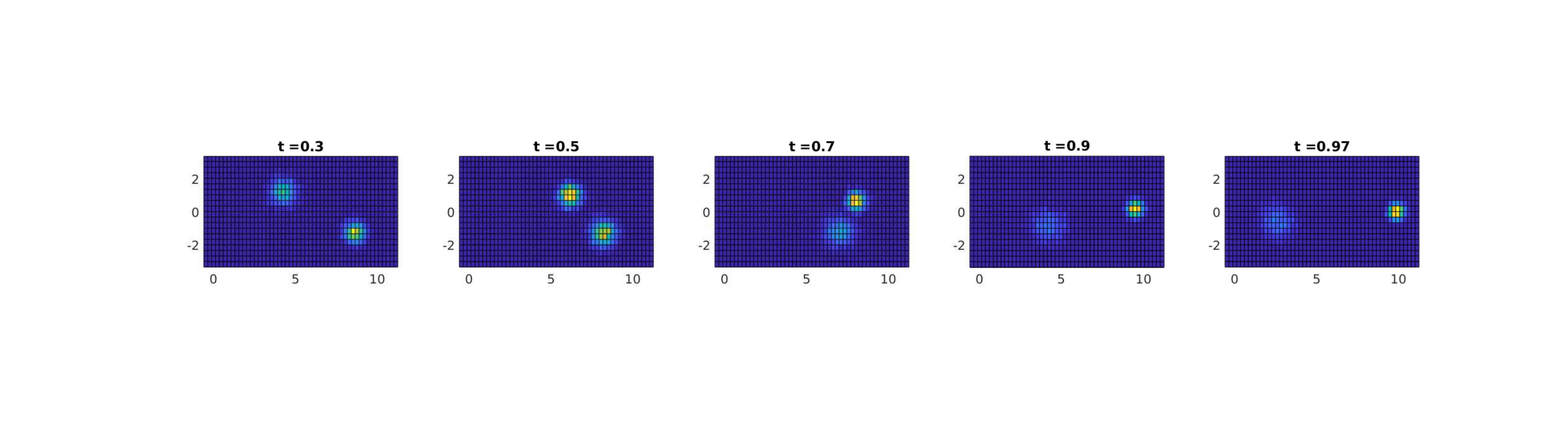} 
 \caption{\textit{\emph{Top row:} Pedestrian paths for parameter set 
'Bidirectional flow' 
from Table~\ref{table:assym_para}. \emph{Bottom row:} 
 Pedestrian density for parameter set 'Bidirectional flow' from 
Table~\ref{table:assym_para}.  }}
\end{figure}
\begin{figure}
\label{fig:bidir_twist}
\centering
\begin{minipage}{0.49\linewidth}
\hspace*{0.5cm}
\includegraphics[scale = 0.25, trim = 0cm 0cm 0cm 
0cm]{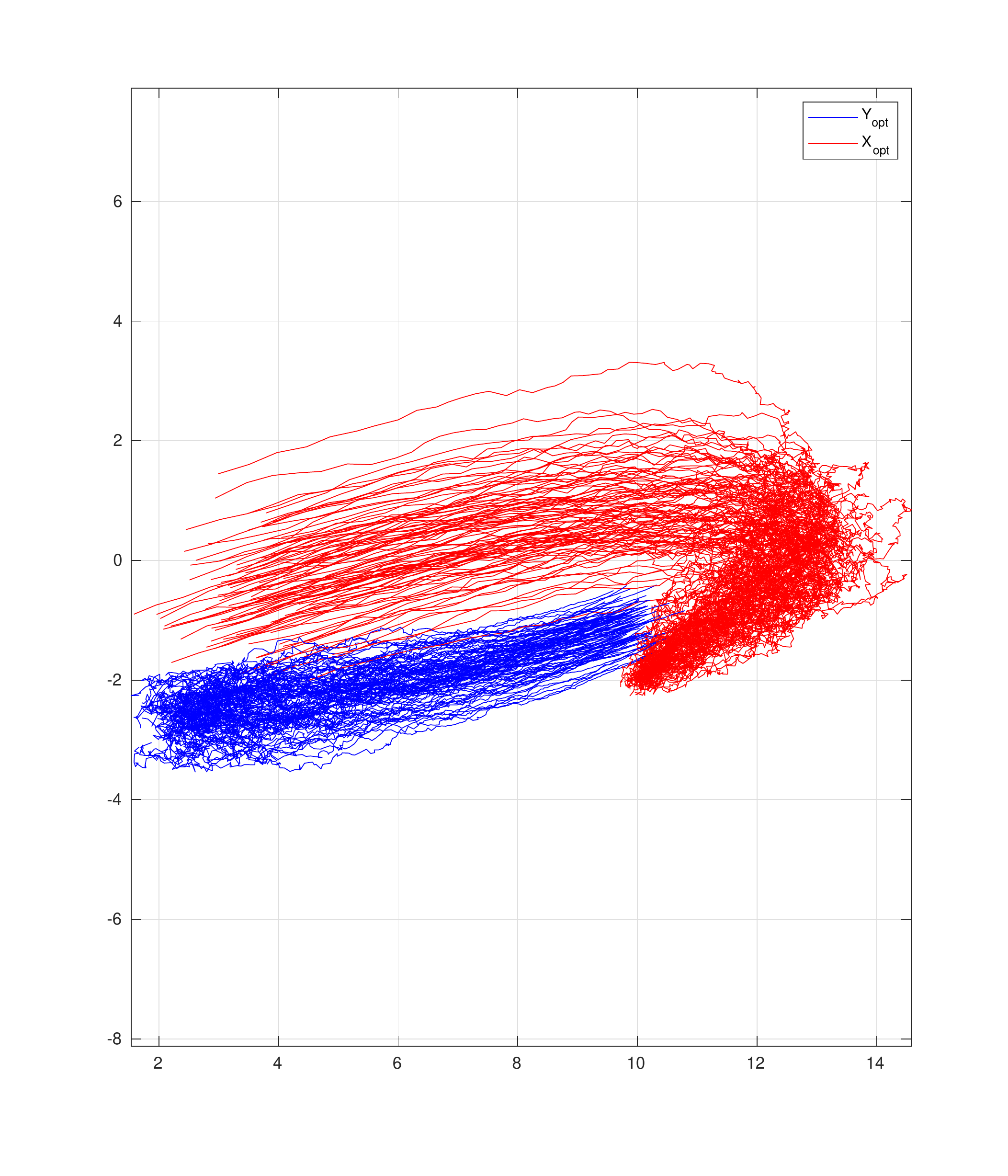}
\end{minipage}
\vspace{0cm}\\
\begin{minipage}{1\linewidth}
\includegraphics[scale = 0.5, trim = 6cm 0cm 0cm 
0cm]{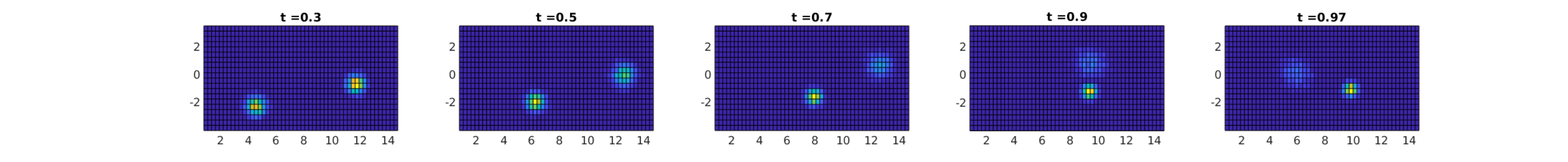}
\end{minipage}
\vspace{0cm}
\label{fig:desvol_2}
\caption{\textit{\emph{Top row:} Pedestrian paths with parameter set 'Twist' 
from Table~\ref{table:assym_para}. \emph{Bottom row:} 
 Pedestrian density with parameter set 'Twist' from 
Table~\ref{table:assym_para}.  }}
\end{figure}

\section{Concluding remarks and research perspectives}
\label{sec:conclusions}
A mean-field type game model for so-called tagged pedestrian motion has been 
presented and the reliability of the model has been studied through 
simulations. To 
perform simulations, necessary and sufficient conditions for 
a Nash equilibrium are provided in Theorem~\ref{thm:game_smp}. The theorem is 
proven under quite restrictive conditions on involved coefficient functions. 
However, necessary conditions for a Nash equilibrium in similar games are 
available under less restrictive conditions and since our proof follows a 
standard path, the conditions can certainly be relaxed. The model captures both 
game-like and minor agent-type scenarios. In the 
latter, the tagged cannot effect crowd movement while in the 
former, the tagged and the surrounding crowd have conflicting interests, 
interact, and compete. The rational pedestrian behavior in the competative 
game-like scenario is to use an equilibrium strategy.
\vspace{0.2cm}\\
There are many variations to the mean-field type game approach. The scenarios 
that have been considered in this paper fall into two rather extreme 
categories; our pedestrians have acted under either \textit{basic} or 
\textit{optimal rationality}, in the terms of \cite{cristiani2011multiscale}. 
When pedestrians neither have access to information about their 
surroundings or the ability to anticipate pedestrian behavior, the best they 
can do is to implement a control policy based on their own 
position and target position. This is a basic level of rationality. If the 
full model is available and pedestrians cooperate, they can implement a 
control policy of optimal rationality. If the tagged can 
observe crowd densities at each instant in time \textit{but not anticipate 
future movement}, dynamic pedestrian preference may be modeled as a set of 
control problems: for each 
$\tau\in[0,T]$,
\begin{equation}
\label{eq:backward_CP_rational}
\left\{
 \begin{aligned}
  \underset{u^y_.(\tau) \in\mathcal{U}^y_\tau}{\text{min}}&\ \x{\int_\tau^T 
f^y(t, 
Y_t, 
\law{Y_\tau}, u^y_t(\tau))dt + \mathbb{I}\{\tau = 0\}h^y(Y_0, \law{Y_0})},
  \\
  \text{s.t.}&\ \ dY_t = b^y(t, Y_t, \law{Y_\tau}, Z_t, u^y_t(\tau))dt + Z_ 
tdB_t,
  \\
  &\ \ Y_T = y_T,
  \end{aligned}
 \right.
\end{equation}
where $\mathcal{U}^y_\tau$ is defined in the same way as $\mathcal{U}^y$, but 
with the interval $[0,T]$ replaced by $[\tau, T]$, cf.~\eqref{eq:def_feasible}.
This is an intermediate level between basic and optimal rationality. Pedestrian 
decision making can also be modeled as a decentralized mechanism, i.e. instead 
of cooperating, pedestrians compete in a game-like manner within the crowd. 
Decentralized crowd formation can be modeled by a MFG:
\begin{itemize}
 \item[(i)] Fix a deterministic function $\mu_\cdot: [0,T] \rightarrow 
\mathcal{P}_2(\rd)$.
 \item[(ii)] Solve the stochastic control problem
 \begin{equation}
\label{eq:backward_CP_MFG}
\left\{
 \begin{aligned}
  \underset{u^y_\cdot \in\mathcal{U}^y}{\text{min}}&\ \x{\int_0^T fy(t, 
Y_t, 
\mu_t, u^y_t)dt + h^y(Y_0, \mu_0)},
  \\
  \text{s.t.}&\ \ dY_t = b(t, Y_t, \mu_t, Z_t, u^y_t)dt + Z_ tdB_t,
  \\
  &\ \ Y_T = y_T.
  \end{aligned}
 \right.
\end{equation}
 \item[(iii)] Determine the function $\hat{\mu}_\cdot : [0,T]\rightarrow 
\mathcal{P}_2(\rd)$ such that $\hat{\mu}_t$ is the law of the 
optimally 
controlled state from Step (ii) at time $t$.
\end{itemize}
Furthermore, minimal exit time (evacuation) problems can be posed at all levels 
of rationality. 
\vspace{0.2cm}\\
The numerical simulation of mean-field BFSDE systems is in this paper done 
either with the least-square Monte Carlo method of \cite{bender2013posteriori}, 
or by reducing them to a system of ODEs by the method of matching. 
The downside with the least-square Monte Carlo method is that it is not clear 
which basis functions to use and the matching method is feasible only for 
linear quadratic problems.
Other simulation approaches include deep learning \cite{weinan2017deep} and 
fixed-point schemes \cite{djehich2018MFBFSDE}. Fast and stable numerical 
solvers for mean-field BFSDEs beyond the linear-quadratic case is an area of 
research that would benefit many applied fields. In pedestrian crowd 
modeling, improved solvers would facilitate simulation when effects like 
congestion, crowd aversion, and anisotropic preferences are present.

\appendix

\section{Mean-field BFSDE}
\label{sec:appendixA}
Given a control pair $(u^x_\cdot, u^y_\cdot) \in \mathcal{U}^x\times 
\mathcal{U}^y$, systems of the form \eqref{eq:backward_dynamics} have been 
studied in the context of optimal control of mean-field type, where they 
naturally arise as necessary optimality conditions. This appendix summarizes 
some of the results on existence and uniqueness of solutions to MF-BFSDEs. 
Let
\begin{equation}
\begin{aligned}
 &\mathbb{H}^{2,d} := \left\{ V_\cdot\ \rd\text{-valued and prog. meas.}\ 
:\ \mathbb{E}\left[\int_0^T |V_s|^2ds\right] < \infty\right\}
 \\
 &\mathbb{S}^{2,d} := \left\{ V_\cdot\ \rd\text{-valued and prog.meas.}\ 
:\ \mathbb{E}\left[\sup_{s\in[0,T]} |V_s|^2 \right] < \infty\right\}
\end{aligned}
\end{equation}
and recall that, in the case of a fixed control pair, $b^x$ and $b^y$ are 
functions of
\begin{equation}
 (\omega, t, y, \mu^y, z, x, \mu^x) \in 
\Omega\times[0,T]\times\rd\times\mathcal{P}_2(\rd) \times 
\mathbb{R}^{d\times(w_x+w_y)}\times\rd\times\mathcal{P}(\rd)
\end{equation}
and $\sigma^x$ is a function of
\begin{equation}
 (\omega, y, \mu^y, z, x, \mu^x) \in 
\Omega\times\rd\times\mathcal{P}_2(\rd) \times 
\mathbb{R}^{d\times(w_x+w_y)}\times\rd\times\mathcal{P}(\rd).
\end{equation}

\subsection{Quadratic-type constraints}
For $t\in[0,T]$, $\mu^y, \mu^x\in\mathcal{P}2(\rd)$, $x,\bar{x},y,\bar{y} \in 
\rd$ and $z,\bar{z} \in \mathbb{R}^{d\times (w_x+w_y)}$, let
\begin{equation}
\begin{aligned}
 &\mathcal{A}(t,y, \bar{y}, \mu^y, v,z,\bar{z}, x, \bar{x}, \mu^x) 
 \\
 &=
 (b^y(t,y,\mu^y,v, z, x,\mu^x) - b^y(t,\bar{y},\mu^y, v, \bar{z}, 
\bar{x}, \mu^x))\cdot(y-\bar{y}) 
\\
&\quad + (b^x(t,y,\mu^y,v, z, x,\mu^x) - b^x(t,\bar{y},\mu^y, v, \bar{z}, 
\bar{x}, \mu^x))\cdot(x-\bar{x}) 
\\
&\quad  + (\sigma^x(t,y,\mu^y, z, x,\mu^x) - \sigma^x(t,\bar{y},\mu^y, 
\bar{z}, \bar{x}, \mu^x))\cdot(z-\bar{z}).
\end{aligned}
\end{equation}
In \cite{djehich2018MFBFSDE}, the authors provide conditions on
$\mathcal{A}$ under which, alongside standard assumptions, 
\eqref{eq:backward_dynamics} has a unique solution 
$(X_\cdot, Y_\cdot, Z_\cdot)$ in $\mathbb{H}^{2,d} \times 
\mathbb{H}^{2,d} \times \mathbb{H}^{2,d\times(w_x+w_y)}$ for all $T$.

\subsection{Small time constraint}

Under standard Lipschitz- and linear growth-conditions and for a non-degenerate 
diffusion $\sigma^x$, \eqref{eq:backward_dynamics} has a unique solution 
$(X_\cdot, Y_\cdot, Z_\cdot) \in \mathbb{S}^{2,d}\times 
\mathbb{S}^{2,d}\times \mathbb{H}^{2,d\times (w_x+w_y)}$ for small enough $T$ 
\cite{carmona2018probabilistic}. The bound on $T$ depends on the Lipschitz 
coefficients. In \cite{carmona2018probabilistic}, the 
authors provide an example where uniqueness fails.

\section{Differentiation of measure-valued functions}
\label{sec:appendixMeas}
The differentiation of measure-valued functions is handled with the lifting 
technique, introduced by P.-L. Lions and outlined in for example 
\cite{cardaliaguet2010notes, buckdahn2011general, carmona2018probabilistic}.
Assume that the underlying probability space is 
rich enough, so that for every $\mu \in 
\pp_2(\rd)$ there is a random variable $Y \in 
L^2_{\mathcal{F}}(\Omega; \rd)$ such that $\law{Y} = \mu$. A 
probability space with this property is $([0,1], \mathcal{B}([0,1]), dx)$.  
Under this assumption, any function $f: \pp_2(\rd) \rightarrow \mathbb{R}$ 
induces a function $F : L^2(\rd) \rightarrow 
\mathbb{R}$ so that $F(Y) := 
f(\law{Y})$. The Fr\'echet derivative of $F$ at $Y$, whenever it exists, is the 
continuous linear functional $DF[Y]$ that satisfies
\begin{equation}
\label{eq:def_of_DF}
 F(Y') - F(Y) = \mathbb{E}\left[DF[Y]\cdot(Y'-Y)\right] + o(\|Y'-Y\|_2), \quad 
\forall Y'\in L^2_{\mathcal{F}}(\Omega; \rd).
\end{equation}
Riesz' Representation Theorem yields 
uniqueness of $DF[Y]$. Furthermore, there 
exists a Borel function $\varphi[\mu] : \mathbb{R}^d \rightarrow\mathbb{R}^d$, 
independent of the version of $Y$, such that $DF[Y] = \varphi[\law{Y}](Y)$
\cite{cardaliaguet2010notes}. Therefore
\begin{equation}
\label{eq:def_of_taylor}
 f(\law{Y'}) - f(\law{Y}) = \mathbb{E}\left[\varphi[Y](Y)(Y'-Y)\right] + 
o(\|Y'-Y\|), \quad \forall Y'\in L^2_{\mathcal{F}}(\Omega; \rd).
\end{equation}
Denote $\partial_\mu f(\mu;x) := \varphi[\mu](x)$, $x\in\rd$, and 
$\partial_\mu f(\law(Y); Y) =: \partial_\mu f(\law(Y))$. The following identity 
characterizes derivatives with respect to elements in $\mathcal{P}_2(\rd)$,
\begin{equation}
 DF[Y] = \varphi[\law{Y}](Y) = \partial_\mu f(\law{Y}).
\end{equation}
Equation \eqref{eq:def_of_taylor} is the Taylor approximation of a 
measure-valued function. Consider now an $f$ that besides the 
measure takes another argument, $\xi$. Then 
\begin{equation}
 f(\xi, \law{Y'}) - f(\xi,\law{Y}) = \mathbb{E}\left[ \partial_\mu 
f(\widetilde{\xi}, \law{Y}; Y)(Y'-Y)\right] + o(\|Y-Y'\|_2),
\end{equation}
where the expectation is taken over \textit{non-tilded random 
variables}. This is abbreviated as
\begin{equation}
\label{eq:def_of_poststar}
 \mathbb{E}\left[\partial_\mu f(\widetilde{\xi}, \law{Y}; Y)(Y'-Y)\right] =: 
\mathbb{E}\left[(\partial_\mu f(\xi, \law{Y}))^*(Y'-Y)\right].
\end{equation}
Note that $\law{Y}$ is deterministic, so the expectation is only taken over the 
'directional argument' of $\partial_\mu f$, $Y$. Also, the expected value 
\eqref{eq:def_of_poststar} is stochastic, since it is \textit{not taken over} 
$\xi$. Taking another expectation and changing the order of integration yields
\begin{equation}
 \mathbb{E}\left[\widetilde{\mathbb{E}}\left[\partial_\mu f(\widetilde{\xi}, 
\law{Y}; Y)\right] (Y'-Y)\right],
\end{equation}
where the tilded expectation is taken \textit{over tilded random variables}. 
This is abbreviated as
\begin{equation}
 \label{eq:def_of_prestar}
 \widetilde{E}\left[\partial_\mu f(\widetilde{\xi}, \law{Y}; Y)\right] =: 
\mathbb{E}\left[ ^*( \partial_\mu f(\xi, \law{Y})) \right]
\end{equation}
\noindent
\textbf{Example from Section \ref{sec:keeptogether}}
The following measure derivative appears in Section 3.1,
\begin{equation}
 \mathbb{E}\left[ ^*\left( \partial_{\mu}
\left(Y_t - \int_{\mathbb{R}^2}y\law{Y_t}(dy)\right)^2\right)\right].
\end{equation}
Note that $\left(Y_t - \int_{\mathbb{R}^2}y\law{Y_t}(dy)\right)^2 = 
(Y_t -\x{M})^2 =: F(M)$ where $M$ is a random variable with 
probability law $\law{Y_t}$. By the Taylor expansion, $DF[M] = 
2(Y_t - \x{M})$ and therefore 
\begin{equation}
 \mathbb{E}\left[ ^*\left( \partial_{\mu}
\left(Y_t - \int_{\mathbb{R}^2}y\law{Y_t}(dy)\right)^2\right)\right] = \x{2(Y_t 
- \x{M})} = 0.
\end{equation}

\section{Proof of Theorem~\ref{thm:game_smp}}
\label{sec:appendixProof}
Let $\bar{u}^{x,\epsilon}_\cdot$ be a spike 
variation of $\hat{u}^x_\cdot$,
\begin{equation}
\bar{u}^{x,\epsilon}_t :=
\left\{
\begin{array}{l l}
\hat{u}^x_t, & t \in [0,T]\backslash E_\epsilon,
\\
u_t, & t \in E_\epsilon,
\end{array}
\right.
\end{equation}
where $u_\cdot \in \mathcal{U}^x$ and $E_\epsilon$ is a subset of 
$[0,T]$ of measure $\epsilon$. Given the control pair 
$(\bar{u}^{x,\epsilon}_\cdot, \hat{u}^y_\cdot)$, denote the corresponding 
solution to the state equation \eqref{eq:backward_dynamics} by 
$\bar{X}^\epsilon_\cdot$ and $(\bar{Y}^\epsilon_\cdot, \bar{Z}^\epsilon_\cdot)$. 
To ease notation, let for $\vartheta \in \{b^x, b^y, \sigma^x, f^x, f^y, h^x, 
h^y\}$,
\begin{equation}
\label{eq:def_for_notation}
  \bar{\vartheta}^\epsilon_t := \vartheta(t, \bar{\theta}^{y,\epsilon}_t, 
\hat{u}^y_t, \bar{Z}^\epsilon_t, \bar{\Theta}^{x,\epsilon}_t),
\ \
\hat{\vartheta}^\epsilon_t := \vartheta(t,\hat{\Theta}^{y}_t,\hat{Z}_t, 
\hat{\Theta}^x_t), 
\ \
\delta_x \vartheta(t) := \vartheta(t, \hat{\Theta}^y_t, \hat{Z}_t, 
\hat{\theta}^x_t, \bar{u}^{x,\epsilon}_t) - \hat{\vartheta}^{\epsilon}_t.
\end{equation}
Consider the ordinary pedestrian's potential loss, would she switch from the 
equilibrium control $\hat{u}^x_\cdot$ to the perturbed 
$\bar{u}^{x,\epsilon}_\cdot$,
\begin{equation}
\begin{aligned}
J^x(\bar{u}^{x,\epsilon}_\cdot; \hat{u}^y_\cdot) - J(\hat{u}^x_\cdot; 
\hat{u}^y_\cdot) 
&= 
\mathbb{E}\left[\int_0^T \left(\bar{f}^{x,\epsilon}_t - 
\hat{f}^x_t\right)dt +  \bar{h}^{x,\epsilon}_T - 
\hat{h}^x_T \right].
\end{aligned}
\end{equation}
A Taylor expansion of the terminal cost difference yields
\begin{equation}
\label{eq:variation_of_h} 
\mathbb{E}\left[\bar{h}^{x,\epsilon}_T  - \hat{h}^x_T\right]
=
\mathbb{E}\left[\left(\partial_y \hat{h}^x_t + 
\mathbb{E}[^*(\partial_\mu\hat{h}^x_T)]\right)\left(\bar{X}^{\epsilon
}_t - \hat{X}_t\right)\right]
+ o\left(\|\bar{X}^\epsilon_T - \hat{X}_T\|_2\right),
\end{equation}
Let $\widetilde{X}^x_\cdot$ and $(\widetilde{Y}^x_\cdot, 
\widetilde{Z}^x_\cdot)$ be the first order variation processes, solving the 
linear BFSDE system
\begin{equation}
\label{eq:variation_eq_xx}
\left\{
\begin{aligned}
d\widetilde{X}^x_t &= \Big\{ \left(\partial_y\hat{b}^x_t + 
\mathbb{E}[^*(\partial_{\mu^y}\hat{b}^x_t)]\right)\widetilde{Y}^x_t + 
\partial_z\hat{b}^x_t\widetilde{Z}^x_t + \left(\partial_x\hat{b}^x_t +
\mathbb{E}[^*(\partial_{\mu^x}\hat{b}^x_t)]\right)\widetilde{X}^x_t
\\\
&\hspace{1cm} + \delta_x b^x(t)1_{E_\epsilon}(t)\Big\}dt
\\
&\quad + \Big\{ \left(\partial_y\hat{\sigma}^x_t + 
\mathbb{E}[^*(\partial_{\mu^y}\hat{\sigma}^x_t)]\right)\widetilde{Y}^x_t + 
\partial_z\hat{\sigma}^x_t\widetilde{Z}^x_t + \left(\partial_x\hat{\sigma}^x_t +
\mathbb{E}[^*(\partial_{\mu^x}\hat{\sigma}^x_t)]\right)\widetilde{X}^x_t \Big\}
dB^x_t,
\\
d\widetilde{Y}^x_t &= \Big\{ \left(\partial_y\hat{b}^y_t + 
\mathbb{E}[^*(\partial_{\mu^y}\hat{b}^y_t)]\right)\widetilde{Y}^x_t + 
\partial_z\hat{b}^y_t\widetilde{Z}^x_t + \left(\partial_x\hat{b}^y_t +
\mathbb{E}[^*(\partial_{\mu^x}\hat{b}^y_t)]\right)\widetilde{X}^x_t
\\\
&\hspace{1cm} + \delta_x b^y(t)1_{E_\epsilon}(t)\Big\}dt + 
\widetilde{Z}^x_tdB_t,
\\
\widetilde{X}^x_0 &= 0, \quad \widetilde{Y}^x_T = 0.
\end{aligned}
\right.
\end{equation}
\begin{lemma}
\label{lem:estimate1}
Assume that $b^x$ are $b^y$ are Lipschitz in the controls, that $b^x, b^y, f^x, 
f^y$ and $\sigma^x$ are differentiable at the equilibrium point almost surely 
for all 
$t$, that their derivatives are bounded almost surely for all $t$ and that
\begin{equation}
 \partial_x\hat{h}^x + \mathbb{E}\left[^*(\partial_{\mu^x}\hat{h}^x)\right] 
\in L^2_{\mathcal{F}_T}(\rd),\quad
 \partial_y\hat{h}^y + \mathbb{E}\left[^*(\partial_{\mu^y}\hat{h}^y)\right] 
\in L^2_{\mathcal{F}_0}(\rd).
\end{equation} 
Then for some positive constant $C$,
\begin{equation}
\begin{aligned}
  &\sup_{t\in[0,T]} \mathbb{E}\left[ |\widetilde{X}^x_t|^2 + 
|\widetilde{Y}^x_t|^2 + \int_0^t\|\widetilde{Z}^x_s\|_F\; ds \right] \leq 
C\epsilon^2. 
  \\
  &\sup_{t\in[0,T]} \mathbb{E}\left[ 
|\bar{X}^{\epsilon}_t - \hat{X}_t - \widetilde{X}^x_t|^2 + 
|\bar{Y}^\epsilon_t - \hat{Y}_t - \widetilde{Y}^x_t|^2 + 
\int_0^t\|\bar{Z}^{\epsilon}_s - \hat{Z}_s - \widetilde{Z}^x_s\|_F\; ds \right] 
\leq 
C\epsilon^2. 
\end{aligned}
\end{equation}
\end{lemma}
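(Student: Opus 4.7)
My plan is to derive both estimates by a coupled $L^2$-energy argument on the linear mean-field BFSDE system~\eqref{eq:variation_eq_xx}, closed with Gronwall's inequality. Existence and uniqueness of $(\widetilde{X}^x_\cdot, \widetilde{Y}^x_\cdot, \widetilde{Z}^x_\cdot)$ follow from the linearity of the system together with boundedness of all coefficients (partial derivatives of $b^x,b^y,\sigma^x$ evaluated at the equilibrium, plus the $L^2$ terminal/initial data), reducing the problem to a case covered by Appendix~\ref{sec:appendixA}. The key structural observation driving the $\epsilon^2$-rate rather than the standard $\epsilon$-rate is that the spike perturbation $\delta_x b^x(t)1_{E_\epsilon}(t)$ enters the variation system \emph{only} through the drifts (never the diffusion). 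Consequently, by Cauchy--Schwarz and boundedness of $\delta_x b^x$,
\begin{equation*}
\mathbb{E}\!\left[\Big(\int_0^t \delta_x b^x(s)1_{E_\epsilon}(s)\,ds\Big)^{\!2}\right] \leq \epsilon\, \mathbb{E}\!\left[\int_0^t 1_{E_\epsilon}(s)|\delta_x b^x(s)|^2\,ds\right] \leq C\epsilon^2,
\end{equation*}
and similarly for the $b^y$-spike term.

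For the first estimate, I would apply Itô's formula to $|\widetilde{X}^x_t|^2$ (forward) and $|\widetilde{Y}^x_t|^2$ (backward), then take expectations, use Burkholder--Davis--Gundy to absorb martingale terms when needed, and Young's inequality to split the cross terms. The boundedness of the partial derivatives yields inequalities of the schematic form
\begin{equation*}
\mathbb{E}|\widetilde{X}^x_t|^2 \leq C\int_0^t \mathbb{E}\big[|\widetilde{X}^x_s|^2 + |\widetilde{Y}^x_s|^2 + \|\widetilde{Z}^x_s\|_F^2\big]\,ds + C\epsilon^2,
\end{equation*}
together with a matching backward estimate coupling $\widetilde{Y}^x, \widetilde{Z}^x$ back to $\widetilde{X}^x$. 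Mean-field terms such as $\mathbb{E}[{}^*(\partial_{\mu^y}\hat{b}^x_t)]\widetilde{Y}^x_t$ are handled by the Fubini-type identity~\eqref{eq:def_of_prestar} from Appendix~\ref{sec:appendixMeas} followed by Jensen, producing contributions linear in $\mathbb{E}|\widetilde{Y}^x_t|^2$ that are absorbed into the Gronwall sum. Adding the forward and backward inequalities so that the $\|\widetilde{Z}^x\|_F^2$ terms balance (modulo Young-overhead), and invoking Gronwall, delivers the first bound.

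For the second estimate, define the remainders $R^x_t := \bar{X}^\epsilon_t - \hat{X}_t - \widetilde{X}^x_t$, and analogously $R^y_t, R^z_t$. A direct computation shows $(R^x_\cdot,R^y_\cdot,R^z_\cdot)$ solve a linear mean-field BFSDE of the same structure as~\eqref{eq:variation_eq_xx}, but with zero initial/terminal data and with driving terms that are Taylor remainders of the form $\bar{b}^{x,\epsilon}_t - \hat{b}^x_t$ minus its linearization evaluated at the differences $\bar{X}^\epsilon-\hat{X}, \bar{Y}^\epsilon-\hat{Y}, \bar{Z}^\epsilon-\hat{Z}$. A preliminary run of the first-part argument applied to the raw difference BFSDE satisfied by $(\bar{X}^\epsilon_\cdot - \hat{X}_\cdot, \bar{Y}^\epsilon_\cdot - \hat{Y}_\cdot, \bar{Z}^\epsilon_\cdot - \hat{Z}_\cdot)$---where the spike now appears as $(\bar{b}^{x,\epsilon}_t - \hat{b}^x_t)1_{E_\epsilon}(t)$ in the drift---yields $\sup_t \mathbb{E}|\bar{X}^\epsilon_t - \hat{X}_t|^2 + \sup_t \mathbb{E}|\bar{Y}^\epsilon_t - \hat{Y}_t|^2 + \mathbb{E}\int_0^T \|\bar{Z}^\epsilon_s - \hat{Z}_s\|_F^2\,ds = O(\epsilon^2)$. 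The triangle inequality $|R^x|^2 \leq 2|\bar{X}^\epsilon - \hat{X}|^2 + 2|\widetilde{X}^x|^2$ already gives the claimed $O(\epsilon^2)$; alternatively, feeding the preliminary bound into the remainder BFSDE and rerunning the energy estimate produces the same conclusion.

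The main obstacle is the forward-backward coupling: the naive strategy of first running Gronwall forward on $\widetilde{X}^x$ and then backward on $\widetilde{Y}^x$ fails, because each drives the other through $\partial_y\hat{b}^x, \partial_x\hat{b}^y$, etc., and through $\widetilde{Z}^x$. The remedy---adding the two energy inequalities so the $\|\widetilde{Z}^x\|_F^2$ contributions from drift and diffusion cancel up to harmless Young's-inequality constants---requires careful bookkeeping of Young's constants so as not to flip the sign of the diffusion term, and it relies crucially on the fact that both endpoints ($\widetilde{X}^x_0 = 0$ and $\widetilde{Y}^x_T = 0$) are deterministic. The mean-field derivative terms introduce a secondary subtlety: the random quantities $\mathbb{E}[{}^*(\partial_\mu \hat{b})]$ must be rearranged via Fubini between the $\mathbb{E}$ and $\widetilde{\mathbb{E}}$ copies, as in~\eqref{eq:def_of_prestar}, before the Gronwall bookkeeping goes through.
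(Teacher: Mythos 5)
Your proposal follows essentially the route the paper intends: the paper's own proof of Lemma~\ref{lem:estimate1} is a one-line deferral to ``standard estimates'' (Yong--Zhou for the forward components, \cite{aurell2018mean} for the backward ones), and what you write out --- It\^o's formula on the squared norms, the observation that the spike enters only the drift (since $\sigma^x$ is control-free) so that Cauchy--Schwarz over $E_\epsilon$ yields the $\epsilon^2$ rate, the Fubini rearrangement \eqref{eq:def_of_prestar} for the measure-derivative terms, and the reduction of the second estimate to the first via the raw-difference system plus the triangle inequality --- is precisely the content of those standard estimates. So the approach is the right one; the concern is with two places where the details are thinner than they should be.

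First, closing the coupled energy estimate is not merely ``careful bookkeeping of Young's constants.'' The system \eqref{eq:variation_eq_xx} is fully coupled: $\widetilde{Y}^x$ and $\widetilde{Z}^x$ enter both the drift and the diffusion of $\widetilde{X}^x$ (note the term $\partial_z\hat{\sigma}^x_t\widetilde{Z}^x_t$ in the forward diffusion), while $\widetilde{X}^x$ enters the drift of $\widetilde{Y}^x$. Adding the forward identity on $[0,t]$ to the backward one on $[t,T]$ leaves the good $+\mathbb{E}\int\|\widetilde{Z}^x\|_F^2$ from the backward quadratic variation fighting against a term of size $C\,\|\partial_z\hat{\sigma}^x\|_\infty^2\,\mathbb{E}\int\|\widetilde{Z}^x\|_F^2$ coming from the forward diffusion, and the latter constant cannot be made small by Young's inequality; moreover the two time intervals do not match. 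This is exactly why Appendix~\ref{sec:appendixA} imposes either the quadratic-type (monotonicity) constraints or a small-time restriction: the a priori bound you need for the linear variational system is obtained from the same monotonicity/inner-product argument that yields uniqueness there, not from generic add-and-Gronwall. You should state explicitly that you inherit those structural conditions for the estimate itself, not only for existence. Second, your inequality $\mathbb{E}\bigl[\int_0^t 1_{E_\epsilon}(s)|\delta_x b^x(s)|^2\,ds\bigr]\le C\epsilon$ requires $\delta_x b^x$ to be bounded; Lipschitz continuity in the control together with $u\in\mathcal{U}^x$ (merely square-integrable) gives only $\mathbb{E}\int_{E_\epsilon}|u_s-\hat{u}^x_s|^2\,ds=o(1)$, hence $o(\epsilon)$ rather than $C\epsilon^2$ after multiplying by $|E_\epsilon|$. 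A bounded control set, or boundedness of $b^x,b^y$ over $U^x\times U^y$, is needed --- the lemma statement itself tacitly relies on this, but your proof should make it explicit. With these two caveats addressed, the argument is sound and matches the paper's intended proof.
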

\begin{proof}
 The proof is a combination of standard estimates, see 
\cite{yong1999stochastic} for the SDE terms and \cite{aurell2018mean} for the 
BSDE terms.
\end{proof}
\noindent
The adjoint processes and the 
Hamiltonian, defined in \eqref{eq:adjoint_in_thm} and 
\eqref{eq:def_hamiltonian} respectively, yield together with 
Lemma~\ref{lem:estimate1} and integration by parts that
\begin{equation}
\label{eq:C7}
\begin{aligned}
&J^x(\bar{u}^{x,\epsilon}_\cdot; \hat{u}^y_\cdot) - J(\hat{u}^x_\cdot; 
\hat{u}^y_\cdot) 
= 
\mathbb{E}\Bigg[\int_0^T \Bigg\{\left(\bar{b}^{x,\epsilon}_t - 
\hat{b}^x_t\right)p^{xx}_t + \left(\bar{b}^{y,\epsilon}_t - 
\hat{b}^y_t\right)p^{xy}_t 
\\
&\hspace{2cm} + \left(\bar{\sigma}^{x,\epsilon}_t - 
\hat{\sigma}^x_t\right)q^{xx}_t - \left(\bar{H}^{x,\epsilon}_t - 
\hat{H}^x_t\right)\Bigg\} dt - p^{xx}_T\widetilde{X}^x_T \Bigg] + o(\epsilon)
\\
&=
\mathbb{E}\Bigg[\int_0^T \Bigg\{\left(\bar{b}^{x,\epsilon}_t - 
\hat{b}^x_t\right)p^{xx}_t + \left(\bar{b}^{y,\epsilon}_t - 
\hat{b}^y_t\right)p^{xy}_t + \left(\bar{\sigma}^{x,\epsilon}_t - 
\hat{\sigma}^x_t\right)q^{xx}_t - \left(\bar{H}^{x,\epsilon}_t - 
\hat{H}^x_t\right)\Bigg\} dt
\\
&\hspace{2cm}  - \int_0^T \widetilde{X}^x_t dp^{xx}_t - \int_0^T p^{xx}_t 
d\widetilde{X}^x_t - \int_0^T d\langle p^{xx}_\cdot, 
\widetilde{X}^x_\cdot\rangle_t
\\
&\hspace{2cm}  - \int_0^T \widetilde{Y}^x_t dp^{xy}_t - \int_0^T p^{xy}_t 
d\widetilde{Y}^x_t - \int_0^T d\langle p^{xy}_\cdot, 
\widetilde{Y}^x_\cdot\rangle_t \Bigg] + o(\epsilon)
\\
&=
-\mathbb{E}\left[ \int_0^T \delta_x H^x(t)1_{E_\epsilon}(t)dt \right] + 
o(\epsilon),
\end{aligned}
\end{equation}
where $\bar{H}^{x,\epsilon}_t, \hat{H}^x_t$ and $\delta_xH^x(t)$ are defined in 
line with~\eqref{eq:def_for_notation}. The final equality is retrieved by 
expanding all differences on the third row of \eqref{eq:C7}, canceling all but 
$\delta_xH^x(t)1_{E_\epsilon}(t)$ with the forth and fifth row, while making use 
of the estimates from Lemma~\ref{lem:estimate1}. 
\vspace{0.2cm}\\
Consider now a spike variation of the tagged's control,
\begin{equation}
\check{u}^{y,\epsilon}_t :=
\left\{
\begin{array}{l l}
\hat{u}^y_t, & t \in [0,T]\backslash E_\epsilon,
\\
u_t, & t \in E_\epsilon,
\end{array}
\right.
\end{equation}
where $u_\cdot \in \mathcal{U}^y$. Following the same lines of calculations as 
above, one finds that if $p^{yx}_\cdot$, $p^{yy}_\cdot$ are given by the 
adjoint equations \eqref{eq:adjoint_in_thm} and the Hamiltonian $H^y$ 
by~\eqref{eq:def_hamiltonian}, then
\begin{equation}
 J^y(\check{u}^{y,\epsilon}_\cdot; \hat{u}^x_\cdot) - J^y(\hat{u}^y_\cdot; 
\hat{u}^x_\cdot) = -\mathbb{E}\left[\int_0^T \delta_y H^y(t)1_{E_\epsilon} 
dt\right] + o(\epsilon),
\end{equation}
where $\delta_y H^y(t)$ is defined in line with~\eqref{eq:def_for_notation}, 
for the spike perturbation $\check{u}^{y,\epsilon}_t$.
The rest of the proof is standard, and can be found in for example 
\cite{yong1999stochastic}.

 \bibliographystyle{plain}
 \bibliography{references}

\begin{thebibliography}{10}

\bibitem{achdou2017mean}
Yves Achdou, Martino Bardi, and Marco Cirant.
\newblock Mean field games models of segregation.
\newblock {\em Mathematical Models and Methods in Applied Sciences},
  27(01):75--113, 2017.

\bibitem{andersson2011maximum}
Daniel Andersson and Boualem Djehiche.
\newblock A maximum principle for {SDE}s of mean-field type.
\newblock {\em Applied Mathematics \& Optimization}, 63(3):341--356, 2011.

\bibitem{antonelli1993backward}
Fabio Antonelli.
\newblock Backward-forward stochastic differential equations.
\newblock {\em The Annals of Applied Probability}, pages 777--793, 1993.

\bibitem{aurell2018mean}
Alexander Aurell.
\newblock Mean-field type games between two players driven by backward
  stochastic differential equations.
\newblock {\em Games}, 9(4):88, 2018.

\bibitem{aurell2017mean}
Alexander Aurell and Boualem Djehiche.
\newblock Mean-field type modeling of nonlocal crowd aversion in pedestrian
  crowd dynamics.
\newblock {\em SIAM Journal on Control and Optimization}, 56(1):434--455, 2018.

\bibitem{bellomo2016human}
N~Bellomo, D~Clarke, L~Gibelli, P~Townsend, and BJ~Vreugdenhil.
\newblock Human behaviours in evacuation crowd dynamics: from modelling to
  “big data” toward crisis management.
\newblock {\em Physics of life reviews}, 18:1--21, 2016.

\bibitem{bellomo2013microscale}
Nicola Bellomo, Abdelghani Bellouquid, and Damian Knopoff.
\newblock From the microscale to collective crowd dynamics.
\newblock {\em Multiscale Modeling \& Simulation}, 11(3):943--963, 2013.

\bibitem{bender2013posteriori}
Christian Bender and Jessica Steiner.
\newblock A posteriori estimates for backward {SDE}s.
\newblock {\em SIAM/ASA Journal on Uncertainty Quantification}, 1(1):139--163,
  2013.

\bibitem{bender2008time}
Christian Bender, Jianfeng Zhang, et~al.
\newblock Time discretization and markovian iteration for coupled {FBSDE}s.
\newblock {\em The Annals of Applied Probability}, 18(1):143--177, 2008.

\bibitem{bensoussan2013mean}
Alain Bensoussan, Jens Frehse, Phillip Yam, et~al.
\newblock {\em Mean field games and mean field type control theory}, volume
  101.
\newblock Springer, 2013.

\bibitem{buckdahn2011general}
Rainer Buckdahn, Boualem Djehiche, and Juan Li.
\newblock A general stochastic maximum principle for {SDE}s of mean-field type.
\newblock {\em Applied Mathematics \& Optimization}, 64(2):197--216, 2011.

\bibitem{buckdahn2009mean}
Rainer Buckdahn, Boualem Djehiche, Juan Li, Shige Peng, et~al.
\newblock Mean-field backward stochastic differential equations: a limit
  approach.
\newblock {\em The Annals of Probability}, 37(4):1524--1565, 2009.

\bibitem{buckdahn2016stochastic}
Rainer Buckdahn, Juan Li, and Jin Ma.
\newblock A stochastic maximum principle for general mean-field systems.
\newblock {\em Applied Mathematics \& Optimization}, 74(3):507--534, 2016.

\bibitem{buckdahn2009meanb}
Rainer Buckdahn, Juan Li, and Shige Peng.
\newblock Mean-field backward stochastic differential equations and related
  partial differential equations.
\newblock {\em Stochastic Processes and their Applications},
  119(10):3133--3154, 2009.

\bibitem{burger2013mean}
Martin Burger, Marco Di~Francesco, Peter~A Markowich, and Marie~Therese
  Wolfram.
\newblock Mean field games with nonlinear mobilities in pedestrian dynamics.
\newblock {\em Discrete and Continuous Dynamical Systems-Series B}, 2014.

\bibitem{burger2014mean}
Martin Burger, Marco Di~Francesco, Peter~A Markowich, and Marie-Therese
  Wolfram.
\newblock Mean field games with nonlinear mobilities in pedestrian dynamics.
\newblock {\em Discrete and Continuous Dynamical Systems-Series B},
  19(5):1311--1333, 2014.

\bibitem{burstedde2001simulation}
Carsten Burstedde, Kai Klauck, Andreas Schadschneider, and Johannes Zittartz.
\newblock Simulation of pedestrian dynamics using a two-dimensional cellular
  automaton.
\newblock {\em Physica A: Statistical Mechanics and its Applications},
  295(3):507--525, 2001.

\bibitem{cardaliaguet2010notes}
Pierre Cardaliaguet.
\newblock Notes on mean field games.
\newblock Technical report, 2010.

\bibitem{carmona2018probabilistic}
Ren{\'e} Carmona and Fran{\c{c}}ois Delarue.
\newblock {\em Probabilistic Theory of Mean Field Games with Applications
  I-II}.
\newblock Springer, 2018.

\bibitem{chraibi2011force}
Mohcine Chraibi, Armel Ulrich~Kemloh Wagoum, Andreas Schadschneider, and Armin
  Seyfried.
\newblock Force-based models of pedestrian dynamics.
\newblock {\em NHM}, 6(3):425--442, 2011.

\bibitem{cirant2015multi}
Marco Cirant.
\newblock Multi-population mean field games systems with neumann boundary
  conditions.
\newblock {\em Journal de Math{\'e}matiques Pures et Appliqu{\'e}es},
  103(5):1294--1315, 2015.

\bibitem{cristiani2011multiscale}
Emiliano Cristiani, Benedetto Piccoli, and Andrea Tosin.
\newblock Multiscale modeling of granular flows with application to crowd
  dynamics.
\newblock {\em Multiscale Modeling \& Simulation}, 9(1):155--182, 2011.

\bibitem{cristiani2015modeling}
Emiliano Cristiani, Fabio~S Priuli, and Andrea Tosin.
\newblock Modeling rationality to control self-organization of crowds: an
  environmental approach.
\newblock {\em SIAM Journal on Applied Mathematics}, 75(2):605--629, 2015.

\bibitem{djehich2018MFBFSDE}
Boualem Djehiche and Said Hamad{\'e}ne.
\newblock Mean-field backward-forward stochastic differential equations and
  nonzero-sum differential games.
\newblock Preprint, 2018.

\bibitem{djehiche2017mean}
Boualem Djehiche, Alain Tcheukam, and Hamidou Tembine.
\newblock A mean-field game of evacuation in multi-level building.
\newblock {\em IEEE Transactions on Automatic Control}, 62(10):5154--5169,
  2017.

\bibitem{djehiche2015stochastic}
Boualem Djehiche, Hamidou Tembine, and Raul Tempone.
\newblock A stochastic maximum principle for risk-sensitive mean-field type
  control.
\newblock {\em IEEE Transactions on Automatic Control}, 60(10):2640--2649,
  2015.

\bibitem{dogbe2010modeling}
Christian Dogb{\'e}.
\newblock Modeling crowd dynamics by the mean-field limit approach.
\newblock {\em Mathematical and Computer Modelling}, 52(9):1506--1520, 2010.

\bibitem{feleqi2013derivation}
Ermal Feleqi.
\newblock The derivation of ergodic mean field game equations for several
  populations of players.
\newblock {\em Dynamic Games and Applications}, 3(4):523--536, 2013.

\bibitem{helbing1995social}
Dirk Helbing and Peter Molnar.
\newblock Social force model for pedestrian dynamics.
\newblock {\em Physical review E}, 51(5):4282, 1995.

\bibitem{hoogendoorn2004pedestrian}
Serge~P Hoogendoorn and Piet~HL Bovy.
\newblock Pedestrian route-choice and activity scheduling theory and models.
\newblock {\em Transportation Research Part B: Methodological}, 38(2):169--190,
  2004.

\bibitem{hu1995solution}
Ying Hu and Shige Peng.
\newblock Solution of forward-backward stochastic differential equations.
\newblock {\em Probability Theory and Related Fields}, 103(2):273--283, 1995.

\bibitem{huang2016backward}
Jianhui Huang, Shujun Wang, and Zhen Wu.
\newblock Backward-forward linear-quadratic mean-field games with major and
  minor agents.
\newblock {\em Probability, Uncertainty and Quantitative Risk}, 1(1):8, 2016.

\bibitem{huang2009revisiting}
Ling Huang, SC~Wong, Mengping Zhang, Chi-Wang Shu, and William~HK Lam.
\newblock Revisiting hughes’ dynamic continuum model for pedestrian flow and
  the development of an efficient solution algorithm.
\newblock {\em Transportation Research Part B: Methodological}, 43(1):127--141,
  2009.

\bibitem{huang2006large}
Minyi Huang, Roland~P Malham{\'e}, Peter~E Caines, et~al.
\newblock Large population stochastic dynamic games: closed-loop
  {M}c{K}ean-{V}lasov systems and the {N}ash certainty equivalence principle.
\newblock {\em Communications in Information \& Systems}, 6(3):221--252, 2006.

\bibitem{hughes2002continuum}
Roger~L Hughes.
\newblock A continuum theory for the flow of pedestrians.
\newblock {\em Transportation Research Part B: Methodological}, 36(6):507--535,
  2002.

\bibitem{ibrahim2017features}
Najihah Ibrahim, Fadratul~Hafinaz Hassan, Rosni Abdullah, and Ahamad~Tajudin
  Khader.
\newblock Features of microscopic horizontal transition of cellular automaton
  based pedestrian movement in normal and panic situation.
\newblock {\em Journal of Telecommunication, Electronic and Computer
  Engineering (JTEC)}, 9(2-12):163--169, 2017.

\bibitem{jin2017simulating}
Cheng-Jie Jin, Rui Jiang, Jun-Lin Yin, Li-Yun Dong, and Dawei Li.
\newblock Simulating bi-directional pedestrian flow in a cellular automaton
  model considering the body-turning behavior.
\newblock {\em Physica A: Statistical Mechanics and its Applications}, 2017.

\bibitem{kohlmann2000relationship}
Michael Kohlmann and Xun~Yu Zhou.
\newblock Relationship between backward stochastic differential equations and
  stochastic controls: a linear-quadratic approach.
\newblock {\em SIAM Journal on Control and Optimization}, 38(5):1392--1407,
  2000.

\bibitem{lachapelle2011mean}
Aim{\'e} Lachapelle and Marie-Therese Wolfram.
\newblock On a mean field game approach modeling congestion and aversion in
  pedestrian crowds.
\newblock {\em Transportation research part B: methodological},
  45(10):1572--1589, 2011.

\bibitem{lacker2017limit}
Daniel Lacker.
\newblock Limit theory for controlled mckean--vlasov dynamics.
\newblock {\em SIAM Journal on Control and Optimization}, 55(3):1641--1672,
  2017.

\bibitem{lasry2007mean}
Jean-Michel Lasry and Pierre-Louis Lions.
\newblock Mean field games.
\newblock {\em Japanese journal of mathematics}, 2(1):229--260, 2007.

\bibitem{li2016linear}
Xun Li, Jingrui Sun, and Jie Xiong.
\newblock Linear quadratic optimal control problems for mean-field backward
  stochastic differential equations.
\newblock {\em Applied Mathematics \& Optimization}, pages 1--28, 2016.

\bibitem{moon2018risk}
Jun Moon, Tyrone~E Duncan, and Tamer Basar.
\newblock Risk-sensitive zero-sum differential games.
\newblock {\em IEEE Transactions on Automatic Control}, 2018.

\bibitem{moussaid2009experimental}
Mehdi Moussa{\"\i}d, Dirk Helbing, Simon Garnier, Anders Johansson, Maud Combe,
  and Guy Theraulaz.
\newblock Experimental study of the behavioural mechanisms underlying
  self-organization in human crowds.
\newblock {\em Proceedings of the Royal Society of London B: Biological
  Sciences}, pages rspb--2009, 2009.

\bibitem{naldi2010mathematical}
Giovanni Naldi, Lorenzo Pareschi, and Giuseppe Toscani.
\newblock {\em Mathematical modeling of collective behavior in socio-economic
  and life sciences}.
\newblock Springer Science \& Business Media, 2010.

\bibitem{pardoux1990adapted}
Etienne Pardoux and Shige Peng.
\newblock Adapted solution of a backward stochastic differential equation.
\newblock {\em Systems \& Control Letters}, 14(1):55--61, 1990.

\bibitem{tembine2017mean}
Hamidou Tembine.
\newblock Mean-field-type games.
\newblock {\em AIMS Math}, 2:706--735, 2017.

\bibitem{twarogowska2014macroscopic}
Monika Twarogowska, Paola Goatin, and Regis Duvigneau.
\newblock Macroscopic modeling and simulations of room evacuation.
\newblock {\em Applied Mathematical Modelling}, 38(24):5781--5795, 2014.

\bibitem{weinan2017deep}
E~Weinan, Jiequn Han, and Arnulf Jentzen.
\newblock Deep learning-based numerical methods for high-dimensional parabolic
  partial differential equations and backward stochastic differential
  equations.
\newblock {\em Communications in Mathematics and Statistics}, 5(4):349--380,
  2017.

\bibitem{yong2010forward}
Jiongmin Yong.
\newblock Forward-backward stochastic differential equations with mixed
  initial-terminal conditions.
\newblock {\em Transactions of the American Mathematical Society},
  362(2):1047--1096, 2010.

\bibitem{yong1999stochastic}
Jiongmin Yong and Xun~Yu Zhou.
\newblock {\em Stochastic controls: Hamiltonian systems and {HJB} equations},
  volume~43.
\newblock Springer Science \& Business Media, 1999.

\bibitem{zhang2012ordering}
Jun Zhang, Wolfram Klingsch, Andreas Schadschneider, and Armin Seyfried.
\newblock Ordering in bidirectional pedestrian flows and its influence on the
  fundamental diagram.
\newblock {\em Journal of Statistical Mechanics: Theory and Experiment},
  2012(02):P02002, 2012.

\end{thebibliography}

\end{document}